\documentclass{amsart}

\usepackage{amsmath,amssymb,amsfonts,amstext,amsthm,bm}
\usepackage{url}
\usepackage{graphicx}
\usepackage{ragged2e}
\usepackage{float}
\usepackage{hyperref}
\usepackage{tikz}
\usepackage[all,cmtip]{xy} 
\usepackage{multicol}
\usepackage{verbatim}
\input xy
\xyoption{all}
\usepackage[all]{xy}


\newcommand{\nwc}{\newcommand}

\nwc{\aaa}{\mathcal{F}}
\nwc{\aap}{\mathcal{F}_{P}}
\nwc{\al}{\alpha}


\nwc{\C}{\mathbb{C}}
\nwc{\cb}{\overline{C}}
\nwc{\ccc}{\mathfrak{c}}
\nwc{\ch}{\widehat{C}}
\nwc{\cin}{\textbf{(v)}}
\nwc{\cl}{C'}
\nwc{\cod}{{\rm cod}}
\nwc{\cp}{\mathcal{C}_{P}}
\nwc{\cpll}{\mathfrak{c}_{P'}}
\nwc{\ct}{\widetilde{C}}

\nwc{\dd}{\mathcal{L}}
\nwc{\ddd}{\mathfrak{d}}
\nwc{\ddl}{\mathcal{L}'}
\nwc{\dlp}{\delta_{P}}
\nwc{\doi}{\textbf{(ii)}}

\nwc{\enq}{$$}

\nwc{\fl}{\flushleft}
\nwc{\fff}{\mathcal{F}}
\nwc{\ffp}{\mathcal{F}_{P}}
\nwc{\ffq}{\mathcal{F}_{Q}}
\nwc{\ffl}{\mathcal{F}'}

\nwc{\G}{\mathcal{G}}
\nwc{\Ga}{\Gamma}
\nwc{\gtl}{\widetilde{g}}
\nwc{\gon}{{\rm gon}}

\nwc{\hra}{\hookrightarrow}
\nwc{\hua}{h^{1}(C,\aaa )}

\nwc{\kk}{{\rm K}}

\nwc{\llb}{\mathcal{L}}

\nwc{\mb}{\mathbb}
\nwc{\mc}{\mathcal}
\nwc{\mm}{\mathfrak{m}}
\nwc{\mmp}{\mathfrak{m}_{P}}
\nwc{\mpd}{\mathfrak{m}_{P}^{2}}

\nwc{\nn}{\mathbb{N}}

\nwc{\ob}{\overline{\mathcal{O}}}
\nwc{\obr}{\mathcal{O}^*}
\nwc{\obp}{\overline{\mathcal{O}}_P}
\nwc{\och}{\mathcal{O}_{\hat{C}}}
\nwc{\oh}{\hat{\mathcal{O}}}
\nwc{\ohp}{\hat{\mathcal{O}}_{P}}
\nwc{\ol}{\mathcal{O}'}
\nwc{\oma}{\Omega (\mathfrak{a})}
\nwc{\omo}{\Omega (\mathcal{O})}
\nwc{\oo}{\mathcal{O}}
\nwc{\op}{\mathcal{O}_P}
\nwc{\opc}{\mathcal{O}_{P,C}}
\nwc{\oph}{\hat{\mathcal{O}}_{P}}
\nwc{\opl}{\mathcal{O}_{P}'}
\nwc{\oplc}{\mathcal{O}_{P,C}'}
\nwc{\opll}{\mathcal{O}_{P'}}
\nwc{\opt}{\tilde{\mathcal{O}}_{P}}
\nwc{\optt}{{\mathcal{O}}_{\tilde{P}}}
\nwc{\oq}{\mathcal{O}_{Q}}
\nwc{\oqt}{\tilde{\mathcal{O}}_{Q}}
\nwc{\ot}{\widetilde{\mathcal{O}}}
\nwc{\overop}{\bar{\oo}_{P}}

\nwc{\pb}{\overline{P}}
\nwc{\pbb}{P^*}
\nwc{\pbi}{\overline{P_{i}}}
\nwc{\pbr}{\overline{P_{r}}}
\nwc{\pgmd}{\mathbb{P}^{g+2}}
\nwc{\pgmu}{\mathbb{P}^{g+1}}
\nwc{\ph}{\hat{P}}
\nwc{\pp}{\mathbb{P}}
\nwc{\prv}{\noindent\textbook{Proof}:}
\nwc{\pt}{\widetilde{P}}
\nwc{\ptl}{\tilde{P}}
\nwc{\pum}{\mathbb{P}^{1}}

\nwc{\qh}{\hat{Q}}
\nwc{\qtl}{\tilde{Q}}
\nwc{\qua}{\textbf{(iv)}}

\nwc{\ra}{\rightarrow}
\nwc{\rh}{\hat{R}}

\nwc{\sei}{\textbf{(vi)}}
\nwc{\sep}{\beq\ast\ \ast\ \ast\enq}
\nwc{\sig}{\sigma}
\nwc{\Sig}{\Sigma}
\nwc{\ssp}{S_{P}}
\nwc{\sss}{{\rm S}}
\nwc{\sys}{\mathcal{L}}

\nwc{\tre}{\textbf{(iii)}}

\nwc{\um}{\textbf{(i)}}

\nwc{\vpb}{v_{\overline{P}}}
\nwc{\vtxp}{\widetilde{V}_{x,P}}
\nwc{\vxp}{V_{x,P}}
\nwc{\vv}{\mathcal{W}}
\nwc{\vvp}{\mathcal{W}_{P}}
\nwc{\val}{\mathcal{V}}

\let \wt=\widetilde

\nwc{\wh}{\hat{\omega}}
\nwc{\whp}{\hat{\omega}_{P}}
\nwc{\woch}{\omega\cdot\mathcal{O}_{\hat{C}}}
\nwc{\woh}{\omega\cdot\hat{\mathcal{O}}}
\nwc{\ww}{\omega}
\nwc{\wwb}{\omega^*}
\nwc{\wwct}{\omega _{\widetilde{C}}}
\nwc{\wwh}{\widehat{\omega}}
\nwc{\wwhp}{\widehat{\omega}_P}
\nwc{\wwp}{\omega _{P}}
\nwc{\wwt}{\widetilde{\omega}}
\nwc{\wwtp}{\widetilde{\omega}_P}

\nwc{\zz}{\mathbb{Z}}

\newtheorem{coro}{Corollary}[section]
\newtheorem{conv}[coro]{Convention}
\newtheorem{dfn}[coro]{Definition}

\newtheorem{lemma}[coro]{Lemma}

\newtheorem{prop}[coro]{Proposition}

\newtheorem{rem}[coro]{Remark}

\newtheorem{thm}[coro]{Theorem}
\newtheorem{conj}[coro]{Conjecture}
\newtheorem{ex}[coro]{Example}
\let \fl=\flushleft

\let \ga=\gamma
\let \sub=\subset
\let \be=\beta
\let \de=\delta
\let \al=\alpha
\let \pr=\prime
\let \la=\lambda
\let \ka=\kappa

\begin{document}

\title{Severi dimensions for unicuspidal curves}

\author{Ethan Cotterill}
\address{Instituto de Matem\'atica, UFF
Rua M\'ario Santos Braga, S/N,
24020-140 Niter\'oi RJ, Brazil}
\email{cotterill.ethan@gmail.com}

\author{Vin\'icius Lara Lima}
\address{Departamento de Matem\'atica, ICEx, UFMG
Av. Ant\^onio Carlos 6627,
30123-970 Belo Horizonte MG, Brazil}
\email{viniciuslaralima@gmail.com}

\author{Renato Vidal Martins}
\address{Departamento de Matem\'atica, ICEx, UFMG
Av. Ant\^onio Carlos 6627,
30123-970 Belo Horizonte MG, Brazil}
\email{renato@mat.ufmg.br}

\subjclass{Primary 14H20, 14H45, 14H51, 20Mxx}

\keywords{linear series, rational curves, singular curves, semigroups}

\begin{abstract}
We study parameter spaces of linear series on projective curves in the presence of unibranch singularities, i.e. {\it cusps}; and to do so, we stratify cusps according to value semigroup. We show that {\it generalized Severi varieties} of maps $\mathbb{P}^1 \ra \mathbb{P}^n$ with images of fixed degree and arithmetic genus are often {\it reducible} whenever $n \geq 3$. We also prove that the Severi variety of degree-$d$ maps with a hyperelliptic cusp of delta-invariant $g \ll d$ is of codimension at least $(n-1)g$ inside the space of degree-$d$ holomorphic maps $\mathbb{P}^1 \ra \mathbb{P}^n$; and that for small $g$, the bound is exact, and the corresponding space of maps is the disjoint union of unirational strata. Finally, we conjecture a generalization for unicuspidal rational curves associated to an {\it arbitrary} value semigroup. 
\end{abstract}

\maketitle
\tableofcontents

\section{Introduction}
Rational curves are essential tools for classifying complex algebraic varieties. It is less well-known, however, that {\it singular} rational curves in projective space are often interesting in and of themselves. Rational curves of fixed degree $d$ in $\mb{P}^n$ are parameterized by an open subset of the Grassmannian $\mb{G}(d,n)$; and {\it singular} rational curves arise from special intersections of $(d-n-1)$-dimensional projection centers of rational normal curves with elements of their osculating flags in particular points. Precisely how this arises is often obscure, and in general a singularity is not uniquely determined by the {\it ramification data} encoded by these intersection numbers. One of the aims of this paper is to shed light on the conditions {\it beyond ramification} that determine a unibranch singularity, and in the process produce a relatively explicit description of the associated parameter spaces of unicuspidal rational curves. 

\medskip
The situation when $n=2$ has been studied extensively, and in this case the geometry is relatively well-behaved. Zariski \cite{Z} first established an upper bound for the dimension of any given component of the {\it Severi variety} $M^2_{d,g}$ of plane curves of fixed degree $d$ and genus $g$, and showed that whenever the upper bound is achieved, a general curve in that component is nodal. Zariski's result subsequently played an important role in Harris' celebrated proof \cite{H1} of the irreducibility of $M^2_{d,g}$; an upshot of their work is that every curve indexed by a point of $M^2_{d,g}$ lies in the closure of the irreducible sublocus of $M^2_{d,g}$ that parameterizes {\it $g$-nodal} rational curves.
However irreducibility, and the dominating role of the nodal locus, simultaneously fail in a particularly simple way when one replaces $M^2_{d,g}$ by $M^n_{d,g}$, the ``Severi variety" of degree-$d$ morphisms $\mb{P}^1 \ra \mb{P}^n$ of degree $d$ and arithmetic genus $g$. Indeed, as we saw in \cite{CFM1}, it is easy to construct examples of Severi varieties $M^n_{d,g}$ with components of strictly-larger dimension than that of the $g$-nodal locus as soon as $n \geq 8$. Each of the excess components produced in \cite{CFM1} parameterizes rational unicuspidal curves for which the corresponding value semigroup is of a particular type, which we christened {\it $\ga^{\ast}$-hyperelliptic} by analogy with Fernando Torres' $\ga$-hyperelliptic semigroups \cite{To2}.

\medskip
In this paper, we take a closer look at rational unicuspidal curves whose value semigroup $\sss$ is $\ga$-hyperelliptic. The simplest case is that of $\ga=0$, in which the underlying cusps are {\it hyperelliptic}, meaning simply that $2 \in \sss$. We show that when $d \gg g$, a hyperelliptic cusp of genus $g$ imposes at least $(n-1)g$ independent conditions on rational curves of degree $d$ in $\mb{P}^n$; moreover, we expect this lower bound to be sharp. This result should be compared against the benchmark codimension $(n-2)g$ of the space of degree-$d$ rational curves with $g$ (simple) nodes. Our analysis is predicated on a systematic implementation of a scheme for counting conditions associated with cusps described in \cite{CFM1}, for which we also give a graphical interpretation at the level of the Dyck path of the corresponding value semigroup. More precisely, our strategy is to fix the local {\it ramification profile}, making a linear change of basis if necessary so that the parameterizing functions of our rational curves are ordered according to their vanishing orders in the preimage of the cusp. We can then write down explicitly those conditions beyond ramification that characterize the cusp, and the upshot of this is an explicit dominant rational map from an affine space to each stratum of fixed ramification profile; in particular, each of these strata is unirational. 

\medskip
Going beyond the hyperelliptic case, we give an explicit lower bound for the codimension of the space $M^n_{d,g;{\rm S}_{\ga}}$ of unicuspidal rational curves of fixed degree $d \gg g$ with a $\ga$-hyperelliptic cusp of genus $g$ and of {\it maximal weight}, inside the space $M^n_d$ of all degree-$d$ rational  curves in $\mb{P}^n$. Torres showed that when $g \gg \ga$, such cusps are precisely those with value semigroup ${\rm S}_{\ga}=2\langle 2,2\ga+1 \rangle+\langle 2g-4\ga+1 \rangle$. We conjecture that our bound computes the {\it exact} codimension of $M^n_{d,g;{\rm S}_{\ga}}$ in $M^n_d$, and we give some computational as well as qualitative evidence for this. Motivated by our results for $\ga$-hyperelliptic cusps of maximal weight, we also give a conjectural combinatorial formula for the codimension of the locus $M^n_{d,g;{\rm S}}$ of rational curves with cusps of {\it arbitrary} type ${\rm S}$. The existence of such a combinatorial formula, albeit conjectural, aligns with the basic mantra (which we borrow from the study of compactified Jacobians of cusps) that the topology of $M^n_{d,g;{\rm S}}$ is controlled by ${\rm S}$ itself. It is also of practical utility. Indeed, we leverage this formula to obtain many new examples of unexpectedly-large Severi varieties associated with $\ga$-hyperelliptic value semigroups ${\rm S}$ of {\it minimal} weight.

\medskip
The smaller $n$ is, however, the more difficult it becomes to produce Severi varieties $M^n_{d,g;{\rm S}}$ of codimension strictly less than $(n-2)g$. Breaking this impasse forced us to rethink our basic organizational protocol for unicuspidal rational curves; and to focus on their stratification according to ramification profile as opposed to value semigroup. Indeed, any Severi variety $M^n_{d,g;{\rm S}}$ of codimension strictly less than $(n-2)g$ necessarily contains a {\it generic ramification stratum} with the same property. Accordingly, understanding the behavior of the value semigroup ${\rm S}$ attached to a {\it generic} parameterization with given ramification profile is crucial. We show that whenever $d \gg g$ and $n \geq 3$, the Severi varieties $M^n_{d,g;{\rm S}}$ obtained from generic cusps are nearly always of unexpectedly small codimension whenever their ramification profiles comprise sequences of consecutive even numbers. We anticipate that an approach based on limit linear series \cite{EH2} will show that the same codimension estimates remain operative when we substitute $M^n_{d,g;{\rm S}}$ by the space of linear series of degree $d$ and rank $n$ on a {\it general} curve of arbitrary genus whose images have a cusp of type ${\rm S}$.

\medskip

\subsection{Conventions}
We work over $\mb{C}$. By {\it rational curve} we always mean a projective curve of geometric genus zero; at times, it will be convenient to conflate a curve with a morphism that describes its normalization. A {\it cusp} is a unibranch (curve) singularity. We denote by $M^n_d$ the space of nondegenerate morphisms $f: \mb{P}^1 \ra \mb{P}^n$ of degree $d>0$. Here each morphism is identified with the set of coefficients of its homogeneous parameterizing polynomials, so $M^n_d$ is a space of frames over an open subset of $\mb{G}(n,d)$. We denote by $M^n_{d,g} \subset  M^n_d$ the subvariety of morphisms whose images have arithmetic genus $g>0$. These curves are necessarily singular. Clearly, $M^n_{d,g}$ contains all curves with $g$ simple nodes or $g$ simple cusps. 

\medskip
In this paper, we will invoke a number of standard tools from linear series and singularities. Accordingly, let $P\in C:=f(\pum)\subset\mathbb{P}^n$ be a cusp. 
Near $P$, the morphism $f$ 
is prescribed by a map $f: t \mapsto (f_1(t), \dots, f_n(t))$ of power series, or equivalently, by a map of rings
\begin{gather*}
\begin{matrix}
\phi : & R:= \mb{C}[[x_1,\dots,x_n]] & \longrightarrow & \mb{C}[[t]]\\
         &x_i            & \longmapsto     &  f_i(t).
\end{matrix}
\end{gather*}
Let $v_t:\mb{C}[[t]] \ra \mb{N}$ denote the standard valuation induced by the assignment $t \mapsto 1$. Let ${\rm S}:=v_t(\phi(R))$ denote the numerical {\it value semigroup} of $P$. The $t$-adic valuation $v_t$ computes the vanishing order in $P$ of elements of the local algebra of the cusp, so hereafter we will refer to {\it $t$-valuations} and {\it $P$-vanishing orders} interchangeably. The (local) {\it genus} of the singularity at $P$ is $\delta_P:=\#(\mb{N}\setminus{\rm S})$, and the (global arithmetic) genus of $C$ is the sum of all of these local contributions:
$$
g=\sum_{P\in C}\delta_P.
$$
We focus exclusively on {\it unicuspidal} rational curves, whose singularities are unibranch singletons. Abusively, we will use $M^n_{d,g; {\rm S}}$ to refer to the subvariety of $M^n_{d,g}$ that parameterizes unicuspidal genus-$g$ rational curves with value semigroup ${\rm S}$. The variety $M^n_{d,g; {\rm S}}$ is further stratified according to the 
strictly-increasing sequence of vanishing orders ${\bf k}=(k_0,k_1,\dots,k_n)$ in $t=0$ of linear combinations of the local sections $1,f_1(t),\dots,f_n(t)$ that parameterize the cusp. Hereafter, we always assume that $k_0=0$. Specifying these vanishing orders is equivalent to specifying a {\it ramification profile} ${\bf k}-(0,1,\dots,n)$ that measures their deviation relative to the generic sequence. As a matter of convenience, we will abusively conflate these two notions. Accordingly, we let $M^n_{d,g; {\rm S},{\bf k}} \sub M^n_{d,g; {\rm S}}$ denote the subvariety of unicuspidal curves with ramification profiles ${\bf k}$ in the preimages of their respective cusps.

\medskip
It will often be convenient to think of the genus of a cusp as an invariant of the associated numerical semigroup ${\rm S}$. Similarly, the {\it weight} of a cusp is defined in terms of the associated value semigroup by
\[
W= W_{\sss}:= \sum_{i=1}^g \ell_i- \binom{g+1}{2}
\]
where $\ell_1< \dots< \ell_g$ denote the $g$ elements of $\mb{N}\setminus{\sss}$.

\medskip
Given a nonnegative integer $\ga$, a numerical semigroup $\sss$ is {\it $\ga$-hyperelliptic} if it contains exactly $\ga$ even elements in the interval $[2,4\ga]$, and $4\ga+2 \in \sss$. Note that when $\ga=0$, the first condition is vacuous, while the second condition stipulates that $2 \in \sss$: in this situation, $\sss$ is simply {\it hyperelliptic}. A useful fact is that every numerical semigroup is $\ga$-hyperelliptic for a unique value of $\ga$, i.e., numerical semigroups are naturally stratified according to {\it hyperellipticity degree}.

\subsection{Roadmap}
A more detailed synopsis of the material following this introduction is as follows. In Section~\ref{hyperelliptic}, we prove Theorem~\ref{unibranch_theorem}, which gives a lower bound on the codimension of the locus of rational curves $f=(f_i)_{i=0}^n: \mb{P}^1 \ra \mb{P}^n$ with hyperelliptic cusps as a function of the vanishing orders of the parameterizing functions $f_i$ in the cusps' preimages. Theorem~\ref{unibranch_theorem} implies that the codimension of the locus of curves with a hyperelliptic cusp of genus $g$ is at least $(n-1)g$; to prove it, we produce an explicit packet of polynomials in the $f_i$ that impose independent conditions on their coefficients. Roughly speaking, these polynomials are of the simplest possible type suggested by the arithmetic structure of the value semigroup ${\rm S}=\langle 2,2g+1 \rangle$. Proposition~\ref{thmut2} establishes that whenever $g \leq 7$, these polynomials generate {\it all} nontrivial conditions imposed by a hyperelliptic cusp of genus $g$, and therefore, that the space of rational curves with hyperelliptic cusps has codimension {\it exactly} $(n-1)g$ and that each of its subsidiary fixed-ramification strata is unirational in this regime. Our argument is computer-based; however, to obtain a result for all $g$, it would suffice to prove that the pattern detailed in Table~\ref{G_and_H_polys} persists in general.

\medskip
In Section~\ref{gamma_hyperelliptic} we turn our focus to (rational curves with) $\gamma$-hyperelliptic cusps of maximal weight, which naturally generalize the hyperelliptic cusps considered in Section~\ref{hyperelliptic}. In Theorem~\ref{thmgam}, we obtain an explicit lower bound for the codimension of rational curves with genus-$g$ $\ga$-hyperelliptic cusps of maximal weight, whenever $g \gg \ga$. We use the arithmetic structure of the underlying semigroup ${\rm S}_{\ga}$ to produce an explicit packet of polynomials in the parameterizing functions $f_i$ of our curves, which in turn impose independent conditions on the coefficients of the $f_i$. We conjecture that these conditions are in fact a {\it complete} set of conditions imposed by $\ga$-hyperelliptic cusps of ${\rm S}_{\ga}$ type, and in Example~\ref{supporting_example} we provide evidence for this; see especially Table 2. 
Our analysis leads directly to Conjecture~\ref{gamma_hyp_conj}, which gives a value-theoretic prediction for the codimension of $M^n_{d,g;{\rm S}}$ in general, whenever this space is nonempty. Our Theorem~\ref{weak_bound} establishes that the prediction made by Conjecture~\ref{gamma_hyp_conj} for the codimension is at least a lower bound. 

\medskip
In Subsection~\ref{minimal_weight_semigroups}, we study (unicuspidal rational curves with) $\ga$-hyperelliptic value semigroups ${\rm S}$ of {\it minimal} weight. These include, in particular, the $\ga^{\ast}$-hyperelliptic examples studied in \cite{CFM1}. In Proposition~\ref{sporadic_minimal_weight_cases}, we exhaustively classify the minimally-ramified strata of such mapping spaces when the target dimension $n$ is at most 7, and as a result we find twenty-one new Severi varieties which should be unexpectedly large; we are able to verify this with Macaulay2 in thirteen cases by certifying that our set of conditions is exhaustive, before running out of computing power. These include the first-known examples with six- and seven-dimensional targets. Assuming the validity of Conjecture~\ref{gamma_hyp_conj}, in Proposition~\ref{new_Severi-excessive_families} we produce new infinite families of unexpectedly large Severi varieties in every target dimension (resp., genus) 6 (resp., 21) or larger. In Subsection~\ref{generic_semigroups}, on the other hand, we {\it unconditionally} construct infinitely many unexpectedly large Severi varieties from {\it generic} cusps with ramification profiles of the form $(2m, 2m+2, \dots, 2m+2n-2)$ whenever $n \geq 3$. In particular, our results are optimal in the ambient target dimension. At present we are not able to compute the exact genera of such Severi varieties when $n \geq 4$; see Theorem~\ref{generic_semigroups_n_geq_4} for a precise statement. However, when $n=3$ we manage to determine the corresponding {\it generic semigroup} (and its genus) explicitly; our Theorem~\ref{generic_semigroup_n=3} shows, in particular, that its genus growth is quadratic in $m$.

\subsection{Acknowledgements} We are grateful to Dori Bejleri, Nathan Kaplan, Nathan Pflueger, and Joe Harris for helpful conversations, to the anonymous referee for helpful comments on the exposition, and to Fernando Torres both for initiating the geometric study of the semigroups that bear his name and for his interest in this ongoing project. We dedicate this paper to his memory. The third named author is supported by CNPq grant 305240/2018-8.

\section{Counting conditions imposed by hyperelliptic cusps}\label{hyperelliptic}

Cusps form a naturally distinguished (simple) class of singularities. Accordingly, it makes sense to ask for dimension estimates for rational curves with at-worst cusps as singularities. In this section, we prove the following result for unicuspidal rational curves, when the cusps in question are hyperelliptic.

\begin{thm}\label{unibranch_theorem}
Given a vector ${\bf k}:=(k_0,\ldots,k_n)\in\mathbb{N}_{\geq 0}^{n+1}$, let $\mathcal{V}_{\bf{k}}:=M^n_{d,g;\langle 2,2g+1 \rangle, {\bf k}} \subset M^n_{d,g}$. 
Suppose, moreover, that $n \leq 2g$ and $d \geq \max(n,2g-2)$, and that $\mc{V}_{\bf k} \neq \emptyset$; then
\[
{\rm cod}(\mathcal{V}_{{\bf k}},M^n_d) \geq (n-1)g+\sum_{i=2}^n\left(\frac{k_i}{2}- i\right).
\]
In particular, the variety $\mathcal{V}:=M^n_{d,g;\langle 2,2g+1 \rangle}$ of rational curves with a unique singularity of hyperelliptic cuspidal type is of codimension at least $(n-1)g$ in $M^n_d$.
\end{thm}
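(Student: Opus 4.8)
The plan is to produce an explicit "packet" of polynomials in the parameterizing functions $f_1,\dots,f_n$ whose vanishing encodes precisely the demand that the cusp have value semigroup containing $\langle 2, 2g+1\rangle$ with the prescribed ramification profile ${\bf k}$, and then to show these polynomials impose the asserted number of independent linear conditions on the coefficients of the $f_i$. First I would normalize: after a linear change of basis on $\mathbb{P}^n$ (which does not change the codimension, since $\mathrm{PGL}_{n+1}$ acts), assume the local sections $1, f_1(t), \dots, f_n(t)$ have strictly increasing vanishing orders $0 = k_0 < k_1 < \dots < k_n$ at the preimage $t=0$ of the cusp. Since $2 \in {\rm S} = \langle 2, 2g+1\rangle$ and the vanishing orders that actually occur form a subset of ${\rm S}$, each $k_i$ with $k_i < 2g+1$ must be even, and more generally the profile ${\bf k}$ is constrained to lie in ${\rm S}$; the quantity $\sum_{i=2}^n(k_i/2 - i)$ measures exactly how far ${\bf k}$ is forced to deviate upward from the minimal admissible profile $(0,2,4,\dots)$, which is why it appears as an additive correction.

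The heart of the argument is the following: being in $\mathcal{V}_{\bf k}$ forces, for each "gap-filling" relation in the semigroup, an algebraic identity among the $f_i$. Concretely, for each even integer $2j$ with $1 \le j \le g$ that is {\it not} realized as a vanishing order by one of the $f_i$ (there are $g$ such values once one accounts for the profile, since the non-gaps of ${\rm S}$ below $2g$ are exactly $0,2,\dots,2g-2$ and the odd element $2g+1$ must be hit), one writes down the unique-up-to-scalar polynomial combination $G_j$ (quadratic, or low degree, built from products $f_af_b$ and powers) whose generic vanishing order at $t=0$ would be strictly less than its "expected" value, and the condition $v_t(G_j) \ge (\text{expected bound})$ cuts out a linear subspace in the coefficient space. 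Counting: there are $(n-1)$ such independent conditions per "missing even slot" for the $n-1$ functions $f_2,\dots,f_n$ beyond the hyperelliptic pair, giving the $(n-1)g$ main term, plus the additional $\sum_{i=2}^n(k_i/2-i)$ conditions coming from the forced extra vanishing in the profile itself. The hypotheses $n \le 2g$ and $d \ge \max(n, 2g-2)$ guarantee there is enough room — both that the relevant semigroup elements fit below $2g+1$ and that the polynomials $G_j$ genuinely involve distinct coefficients of the $f_i$ (no degenerate overlap forcing a condition to be automatically satisfied or doubly counted), which is what makes the conditions independent.

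The main obstacle I expect is precisely the {\it independence} of these conditions: writing down the packet of polynomials is combinatorial bookkeeping driven by the structure of $\langle 2, 2g+1\rangle$, but verifying that the resulting linear functionals on $M^n_d$ are linearly independent requires exhibiting, for each condition, a coefficient (or a distinguished monomial in the $f_i$) that it "sees" and that no other condition in the packet sees — essentially producing a triangular structure with respect to a suitable ordering of the coefficients. This is where the lower bound $d \ge \max(n, 2g-2)$ is used in earnest: it ensures the parameterizing polynomials have enough coefficients that the "pivot" coefficients for distinct conditions are genuinely distinct. I would organize this by ordering the conditions according to the semigroup element they kill and, within each, according to the index $i$ of $f_i$, then check that the leading term of the $j$-th polynomial, expressed in the coefficients of $f_i$, is not expressible in terms of the coefficients appearing in earlier polynomials. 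The final sentence of the theorem is then the specialization to the minimally-ramified profile ${\bf k} = (0,2,4,\dots,2n)$, for which the correction term $\sum_{i=2}^n(k_i/2-i) = \sum_{i=2}^n(i - i) = 0$ vanishes, leaving codimension at least $(n-1)g$.
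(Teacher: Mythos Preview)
Your overall architecture --- build a packet of polynomials in the $f_i$ whose leading coefficients must vanish, then verify independence via a triangular ``pivot'' structure in the parameterizing coefficients --- is exactly the right shape, and matches the paper's strategy. But the proposal is not yet a proof, because the description of \emph{which} polynomials to write down and \emph{what} they are indexed by is off-target, and as a result the counting is muddled.

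Specifically: the conditions beyond ramification do not come from ``missing even slots'' of the profile. The gaps of ${\rm S}=\langle 2,2g+1\rangle$ are the \emph{odd} integers $1,3,\dots,2g-1$; what forces a condition is that a polynomial in the $f_i$ with even leading $t$-valuation cannot have its next coefficient survive when that next exponent is an odd gap. Concretely, the paper sets $F_i:=f_i-f_1^{k_i/2}$ for each $i\ge 2$, observes that $[t^{k_i+1}]F_i$ must vanish, and then iterates: $F^{\ast}_{i,j}:=F^{\ast}_{i,j-1}-([t^{k_i+2(j-1)}]F^{\ast}_{i,j-1})\,f_1^{k_i/2+j-1}$, so that $[t^{k_i+2j-1}]F^{\ast}_{i,j}=0$ is forced for each odd gap $k_i+2j-1$ above $k_i$. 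This yields exactly $g-k_i/2$ conditions per index $i\ge 2$, and independence is immediate because the condition indexed by $(i,j)$ is \emph{linear} in the fresh variable $a_{i,k_i+2j-1}$. Your phrase ``quadratic, or low degree, built from products $f_af_b$ and powers'' does not describe these; only $f_i$ and powers of $f_1$ are involved.

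You are also missing the clean bookkeeping device ${\rm cod}(\mathcal{V}_{\bf k},M^n_d)=r_P+b_P-1$, with $r_P=\sum_{i=1}^n(k_i-i)$ the ramification contribution and $b_P$ the number of independent conditions beyond ramification. Plugging in $k_1=2$ and $b_P\ge\sum_{i=2}^n(g-k_i/2)$ gives the stated bound directly; the correction $\sum_{i=2}^n(k_i/2-i)$ is then what remains of $r_P$ after the cancellation, not a separate packet of ``extra vanishing'' conditions to be produced. Without this split, your two paragraphs are double-counting (or mis-counting) the same phenomenon from two sides. Finally, the hypothesis $d\ge\max(n,2g-2)$ is not used to separate pivot coefficients as you suggest --- triangularity already does that --- but rather to ensure nondegeneracy and that the relevant $t$-degrees up to $2g-1$ actually appear in the $f_i$.
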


\begin{rem} 
\emph{The condition $d \geq n$ is imposed by the requirement that our rational curves be nondegenerate, while the condition $n \leq 2g$ is an artifact of our method of proof (though likely this assumption may be removed). 
It is less clear what a reasonable lower threshold for the degree as a function of the genus should be; however, the assumption that $d \geq 2g-2$ includes the (canonical) case in which $d=2g-2$ and $n=g-1$. Note that $\mc{V}_{\bf k}$ is nonempty if and only if $k_0=0$, $k_1=2$, and the remaining $k_i$, $i=2,\dots,n$ belong to $\langle 2,2g+1\rangle$.}
\end{rem}

\begin{proof}
Let $C$ denote the image of a morphism $f:\mb{P}^1 \ra \mb{P}^n$ corresponding to a point of $\mc{V}$. Then $C$ ramifies at $P$ to order 
\begin{equation}\label{ramification}
r_P= \sum_{i=1}^n (k_i-i)
\end{equation}
and we have
\begin{equation}\label{codimension}
{\rm cod}(\mathcal{V}_{\bf k},M^n_d) = r_P+b_P-1
\end{equation}
in which $b_P$ denotes the number of independent conditions beyond ramification, and the $-1$ on the right-hand side of \eqref{codimension} arises from varying the preimage of $P$ along $\mb{P}^1$.

\medskip
Here we may assume $k_0=0$ and $k_1=2$ without loss of generality. 
In view of \eqref{ramification} and \eqref{codimension}, it suffices to show that each $f_i$, $i=2,\dots,n$ produces at least $g- \frac{k_i}{2}$ conditions beyond ramification. We may further suppose that $k_i<2g$, $i=1, \dots, n$, since whenever $k_i\geq 2g$ for some $i$, each of the $f_j$'s with $j \geq i$ produces at least $g$ ramification conditions. In light of our assumption that $n \leq 2g$ this means, in particular, that every $k_i$ is {\it even}.

\medskip
Without loss of generality, we may also assume that $f^{-1}(P)=(0:1) \in \mb{P}^1$, and that the cusp supported in $P$ is parameterized by $t$-power series $(f_1,\dots,f_n)$, where
\begin{equation}\label{power_series_parameterization}
\begin{split}
f_1&=t^2+a_{1,3}t^3+a_{1,4}t^4+ \cdots \\
f_2&=t^{k_2}+a_{2,k_2+1}t^{k_2+1}+a_{2,k_2+2}t^{k_2+2}+ \cdots \\
     &\ \ \ \ \ \ \vdots\\
f_n &=t^{k_n}+a_{n,k_n+1}t^{k_n+1}+a_{n,k_n+2}t^{k_n+2}+ \cdots
\end{split} 
\end{equation}
for suitable complex coefficients $a_{i,j}$. Note that the power series $f_i$ in \eqref{power_series_parameterization} is equal to the quotient of the $i$th and $0$th {\it global} parameterizing functions introduced previously.

\medskip
We now recursively define
\[
F_i:=f_i-f_1^{\frac{k_i}{2}}, F^{\ast}_{i,1}:= F_i, \text{ and }
F^{\ast}_{i,j}:=F^{\ast}_{i,j-1}-([t^{k_i+2(j-1)}]F^{\ast}_{i,j-1})f_1^{\frac{k_i}{2}+j-1}
\]
for every $2\leq i\leq n$ and $2\leq j\leq g-k_i/2$. The odd number $k_i+2j-1$ is a gap of the value semigroup of the hyperelliptic cusp, so the coefficient of $[t^{k_i+2j-1}]F^{\ast}_{i,j}$ must vanish. Let $C_{i,j}$ denote the polynomial in the coefficients of $f_i$ and $f_1$ associated with the vanishing condition $[t^{k_i+2j-1}]F^{\ast}_{i,j}=0$. Those coefficients of $f_i$ that appear in $C_{i,j}$ run from $a_{i,k_i+1}$ to $a_{i,k_i+2j-1}$; and $C_{i,j}$ is linear in the variables $a_{i,k_i+2j-1}$. It follows that the equations $C_{i,j}=0$ are algebraically independent; and for every $i \geq 2$, there are $g-\frac{k_i}{2}$ independent conditions beyond ramification, as required.
\end{proof}

\begin{ex}\label{exept1}
\emph{In Theorem~\ref{unibranch_theorem}, we showed that $b_P\geq \sum_{i=2}^n (g-i)$. 
In this example we detail the case in which $n=4$ and $g=7$, in order to show that our lower bound on $b_P$ is often an equality. It will motivate 
the following two results, and will serve as a template for the proofs of several others. Our strategy is predicated on producing a set of 
elements in the local algebra of a hyperelliptic cusp 
that algebraically generate {\it all} conditions beyond ramification; we call these \emph{$G$-polynomials}. To prove that $G$-polynomials algebraically generate the set of conditions beyond ramification, we begin with a ``universal" element $F$ in the local algebra, thought of as a polynomial in those $n$-tuples of power series $(f_1,\dots,f_n)$ that parameterize elements of $M^n_{d,g; {\rm S},{\bf k}}$ near their respective cusps. We then argue inductively on the $t$-adic valuations of these power series; as the valuation of $F$ necessarily belongs to ${\rm S}$, {\it gaps} of ${\rm S}$ enforce algebraic conditions on the coefficients of the parameterizing series $f_i$. 
}

\medskip
\noindent \emph{More precisely, let ${\rm S}=\langle 2,15 \rangle$ and ${\bf k}=(2,4,6,8)$. Near the cusp, the corresponding universal parameterization is defined by power series
\begin{align*}
f_1&=t^2+a_{1,3}t^3+a_{1,4}t^4+ \cdots \\
f_2&=t^4+a_{2,5}t^5+a_{2,6}t^6+ \cdots \\
f_3&=t^6+a_{3,7}t^7+a_{3,8}t^8+ \cdots \\
f_4&=t^8+a_{4,9}t^9+a_{4,10}t^{10}+ \cdots 
\end{align*}}

\noindent \emph{Now any $F\in\op$ is of the form
\begin{equation}
\label{equfff}
F=E+G+H+I
\end{equation}
where $E, F, G \in \C[f_1,\ldots,f_4]$ 
have $t$-adic valuations less than or equal to $2g-1$, and $I$ belongs to the conductor ideal of $\op$\footnote{It is well-known that every element of $\op$ with valuation $2g$ belongs to $I$, so a decomposition of $F$ as in \eqref{equfff} always exists.}. In particular, we have
\begin{align*}
E&:=\al_{(0)}+\al_{(1)}f_1; \\ 
G&:=\sum\limits^{4}_{i=2} \alpha_{(i)} f_i + \sum\limits_{j=2}^{6} \al_{(1^j)} f_1^{j}; \text{ and } \\
H &:=\al_{(2,1)} f_1 f_2+\al_{(2,1^2)} f_1^2 f_2 + \al_{(3,1)} f_1 f_3 + \al_{(2^2)} f_2^2 + \al_{(3,1^2)} f_1^2 f_3 + \al_{(2,1^3)} f_1^3 f_2 + \\
&\ \al_{(2^2,1)} f_1 f_2^2 + \al_{(4,1)} f_1 f_4 + \al_{(3,2)} f_2 f_3 + \al_{(2,1^4)} f_1^4 f_2 + \al_{(3,1^3)} f_1^3 f_3 + \al_{(4,1^2)} f_1^2 f_4 + \\ &\ \al_{(2^2,1^2)} f_1^2 f_2^2 + \al_{(3,2,1)} f_1 f_2 f_3  + \al_{(2^3)} f_2^3 + \al_{(4,2)} f_2 f_4 + \al_{(3^2)} f_3^2.
\end{align*}} 

\noindent \emph{
To iteratively produce algebraic constraints beyond ramification, we now argue as follows.  If $\al_{(0)}=0$ (resp. $\al_{(0)}=\al_{(1)}=0$) in (\ref{equfff}), we immediately deduce that $F$ has valuation at least $2$ (resp., $4$), as $1$ (resp., $3$)
belongs to $\mb{N} \setminus {\rm S}$ and as such is disallowed as a vanishing order. 
Similarly, terms in the conductor ideal contribute no conditions; so without loss of generality, we may assume that $E=I=0$.
Accordingly, we may rewrite $F$ as
\begin{align*}
F&:=\alpha_{(2)}f_2+\al_{(1^2)}f_1^2
\\
  &\ +\alpha_{(3)}f_3+\al_{(1^3)} f_1^3+\al_{(2,1)} \ f_1 f_2\\
 &\ +\alpha_{(4)} f_4+\al_{(1^4)} f_1^4+\al_{(2,1^2)} f_1^2 f_2+\al_{(3,1)} f_1 f_3+\al_{(2^2)} f_2^2 \\
  &\ +\al_{(1^5)}f_1^5+\al_{(3,1^2)} f_1^2 f_3+\al_{(2,1^3)}f_1^3 f_2+\al_{(2^2,1)}f_1 f_2^2+\al_{(4,1)}f_1 f_4 + \al_{(3,2)}f_2 f_3\\ 
 &\ +\al_{(1^6)}f_1^6+\al_{(2,1^4)} f_1^4 f_2+\al_{(3,1^3)} f_1^3 f_3 + \al_{(4,1^2)} f_1^2 f_4 + \al_{(2^2,1^2)} f_1^2 f_2^2+\al_{(3,2,1)}f_1 f_2 f_3 \\
 &\ \ \ \ \ \ \ \ \ \ \ \ \ \ \ \  \ \ \ \ \ \ \ \ \  \ \ \ \ \ \ \ \  \ \ \ +\al_{(2^3)}f_2^3+\al_{(4,2)} f_2 f_4+\al_{(3^2)}f_3^2
\end{align*}
in which each line in the above diagram corresponds to a homogeneous 
summand of fixed valuation.}

\medskip
\noindent \emph{We now 
expand $F$ as a power series in $t$. The initial part of the expansion reads
$$
F=(\alpha_{(2)} + \al_{(1^2)}) t^4+(a_{2,5}  \alpha_{(2)} + 2 a_{1,3}  \al_{(1^2)}) t^5+O(t^6)
$$
so that 
$v_t(F)>4$ if and only if
\[
\phi_1(\alpha_{(2)},\al_{(1^2)}):=\alpha_{(2)}+ \al_{(1^2)}=0.
\]
Accordingly, we set $\al_{(1^2)}=\ell_1(\alpha_{(2)}):=-\alpha_{(2)}$; and, therefore, the coefficient of $t^5$ above becomes $$\psi_1(\alpha_{(2)}):=\alpha_{(2)}(a_{2,5} - 2 a_{1,3}).$$
}\end{ex}

{\flushleft The} fact that $5$ is a gap now forces
$\psi_1=0$ for every $\al_{(2)} \in \mb{C}$, i.e., that
\[
G_1=0
\]
where $G_1:=a_{2,5} - 2 a_{1,3}$; up to multiplication by constant, this is the unique condition imposed by the fact that $5 \notin \sss$.

\medskip
{\flushleft In} order to capture the essence of the method, we will no longer explicitly record the polynomials but simply mention which coefficients $\alpha_{\lambda}$ are involved at each step.

{\flushleft In} a second step, we write
$$
F=\psi_1 t^5+ \phi_2 t^6+ \psi_2 t^7+ O(t^8)
$$ 
and we impose $\phi_2=0$, which in turn allows us to rewrite $\psi_2$ as
$$
\psi_2=\alpha_{(2)}G_2+\alpha_{(3)}G_3+\alpha_{(2,1)}G_1.
$$
The upshot is that through step two of our procedure, the polynomials $G$ are responsible for {\it all} of the conditions imposed on the coefficients of the parameterizing functions $f_i$. Indeed, $G_1=0$ is the unique condition arising from step one, so the new conditions in step two are $G_2=0$ and $G_3=0$. 

\noindent More generally, at step $s$ of our procedure, we write
$$
F=\psi_{s-1} t^{2s+1}+\phi_s t^{2s+2}+\psi_{s+1}t^{2s+3}+O(t^{2s+4})
$$ 
and set $\phi_s=0$; then $\psi_s=0$ is forced, and this allows us to rewrite $\psi_s$ as a linear combination of polynomials in the coefficients of the parameterizing functions $f_i$. 


\medskip
\noindent Referencing the induced decomposition $F=G+H$, we call those polynomial conditions produced by $G$ (resp., $H$) as we iterate our procedure {\it $G$-polynomials} (resp., {\it $H$-polynomials}). The following table gives a precise description of each of these sets of polynomials in the coefficients of the $f_i$; the upshot is that the $H$-polynomials are {\it contained} in the set of $G$-polynomials.

 \begin{table}[h]\label{G_and_H_polys}
\begin{tabular}{r|r|r|r|r|r|r|}
 & 
\multicolumn{3}{|c|}{G-polynomials} & 
\multicolumn{3}{|c|}{H-polynomials} \\ 
\hline                               
Gap & $\alpha_{(2)}$ & $\alpha_{(3)}$ & $\alpha_{(4)}$ & $\al_{(2,1)}$ & $\al_{(3,1)}$ & $\al_{(4,1)}$ \\
\hline    
$5$ & $G_1$ & & & & &  \\
$7$ & $G_2$ & $G_3$ & & $G_1$ & &  \\
$9$ & $G_4$ & $G_5$ & $G_6$ & $G_2$ & $G_3$ & \\
$11$ & $G_7$ & $G_8$ & $G_9$ & $G_4$ & $G_5$ & $G_6$ \\
$13$ & $G_{10}$ & $G_{11}$ & $G_{12}$ & $G_7$ & $G_8$ & $G_9$       
\end{tabular}
\vspace{10pt}
\caption{$G$- and $H$-polynomials for hyperelliptic cusps}
\end{table}

\noindent More precisely, the conditions imposed by the $G$'s (whose scalar multipliers are the variables $\alpha_{(2)}$, $\alpha_{(3)}$, $\alpha_{(4)}$) are systematically reproduced (at precisely one later step) by the $H$'s (whose scalar multipliers are $\al_{(2,1)}$, $\al_{(3,1)}$, $\al_{(4,1)}$). All available empirical evidence suggests that this same pattern persists for arbitrary values of $g$ and $n$; proving that this holds in general, which in turn would imply a version of Proposition~\ref{thmut2} without hypotheses on $g$, is an interesting problem.

\medskip
\noindent In Figure 1, we give a graphical interpretation of the conditions imposed by the $G$'s, i.e., by the polynomials $F_i=F^{\ast}_{i,1}$ and their inductively derived children $F^{\ast}_{i,j}$, $j \geq 2$. Graphically speaking, the index $i$ specifies a column, while the index $j$ specifies a number of upward steps from the Dyck path that codifies the semigroup. This graphical interpretation generalizes naturally to the case of $\ga$-hyperelliptic cusps, as we will see later (compare Figure 2 below).

\begin{figure}[H] \label{hyperelliptic_graphic}
\begin{center}
\includegraphics[width=5cm,height=5cm,keepaspectratio]{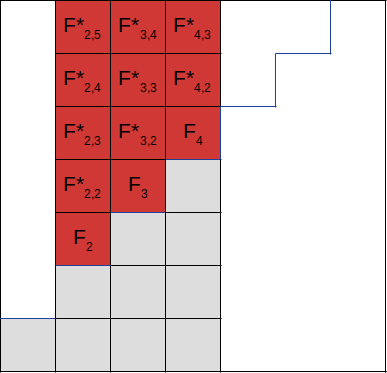}
\end{center}
\caption{Conditions contributing to $b_P$ and $r_P$ for rational curves with a hyperelliptic cusp when $g=7$ and $n=4$.}
\end{figure}

\begin{lemma}\label{lemleq}
The polynomials of the form $G=\sum\limits^{n}_{i=2} \alpha_{(i)} f_i + \sum\limits_{j=k_2/2}^{g-1} \al_{(1^j)} f_1^{j}$
impose exactly $
\sum_{i=2}^n \left(g-\frac{k_i}{2}\right)
$
independent conditions beyond ramification.
\end{lemma}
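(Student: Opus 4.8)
The plan is to make precise the informal argument sketched in the proof of Theorem~\ref{unibranch_theorem} and in Example~\ref{exept1}, isolating the two halves of the claim: that the polynomials $G$ impose \emph{at least} $\sum_{i=2}^n(g-k_i/2)$ independent conditions, and that they impose \emph{at most} that many. The lower bound is essentially already in hand: the family $\{C_{i,j}\}$ constructed in the proof of Theorem~\ref{unibranch_theorem} from $F_i=f_i-f_1^{k_i/2}$ and its children $F^\ast_{i,j}$ is precisely the family of conditions produced by specializing the general $G$ to $\al_{(i)}=1$, $\al_{(i')}=0$ for $i'\neq i$, and reading off the coefficients of the $f_1^{k_i/2+j-1}$ that must be subtracted; the key structural observation, already noted there, is that $C_{i,j}$ is \emph{linear} in the coefficient $a_{i,k_i+2j-1}$ of $f_i$, and that as $(i,j)$ ranges over $2\le i\le n$, $1\le j\le g-k_i/2$, these ``leading'' variables are pairwise distinct. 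Hence the Jacobian of the map $(a_{i,j})\mapsto (C_{i,j})$ is (after reordering) block-triangular with nonzero diagonal, so the $C_{i,j}$ are algebraically independent, giving the lower bound.

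For the upper bound, I would argue that every condition produced by a $G$-polynomial lies in the span of the $C_{i,j}$'s, so no further independent conditions can arise. Concretely: write a general $G=\sum_{i=2}^n\al_{(i)}f_i+\sum_{j=k_2/2}^{g-1}\al_{(1^j)}f_1^j$. Expanding in $t$, the requirement that $v_t(G)\in\sss$ forces, step by step through the odd gaps $k_i+2j-1<2g$, a sequence of algebraic relations among the $\al$'s and the $a_{i,\ell}$'s. At each even step one uses a free parameter $\al_{(1^m)}$ to kill the coefficient of the corresponding even power (this is always possible since the coefficient is linear in $\al_{(1^m)}$ with unit leading term, because $f_1=t^2+\cdots$ is monic); after these substitutions the coefficient of the next odd power $t^{k_i+2j-1}$ becomes a linear combination $\sum_i \al_{(i)}(\text{polynomial in the }a\text{'s})$, and the polynomials appearing are exactly the $C_{i,j}$ up to sign and reindexing. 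Since this must vanish for all values of the $\al_{(i)}$, the conditions extracted are precisely $\{C_{i,j}=0\}$ and nothing more. Counting these: for fixed $i$ the admissible $j$ run over those with $k_i<k_i+2j-1<2g$, i.e. $1\le j\le g-k_i/2$, contributing $g-k_i/2$ conditions, for a total of $\sum_{i=2}^n(g-k_i/2)$.

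The main obstacle is the bookkeeping in the upper-bound step: one must verify carefully that the elimination of the even-power coefficients via the $\al_{(1^j)}$ can always be carried out without interfering with, or creating spurious new, odd-power conditions — i.e. that the substitutions are ``triangular'' with respect to the natural ordering of $t$-valuations, so that the polynomial multiplying $\al_{(i)}$ in the coefficient of $t^{k_i+2j-1}$ depends only on the $a_{i,\ell}$ with $\ell\le k_i+2j-1$ and not on the $\al$'s. This is exactly the triangularity already exploited in Theorem~\ref{unibranch_theorem}, so I would set it up once, as a lemma on the $t$-expansion of $F^\ast_{i,j}$, and invoke it for both bounds; everything else is routine. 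I would also remark that the hypotheses $k_0=0$, $k_1=2$, $k_i\in\sss$, and $k_i<2g$ (which may be assumed as in the proof of Theorem~\ref{unibranch_theorem}, since any $k_i\ge 2g$ contributes a full $g$ ramification conditions rather than a ``beyond ramification'' condition) are what guarantee that the index ranges above are the stated ones.
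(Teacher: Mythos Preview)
Your proposal is correct and follows essentially the same two-sided strategy as the paper: the lower bound via the $C_{i,j}$ from Theorem~\ref{unibranch_theorem}, and the upper bound by expanding a general $G$, eliminating even-power coefficients using the $\al_{(1^j)}$, and reading off the odd-gap conditions. The only stylistic difference is that the paper carries out the elimination step-by-step and counts the conditions produced as $\sum_{i=2}^{n}(i-1)(d_i+1)$ before verifying the telescoping identity $\sum_{i=2}^{n}(i-1)(d_i+1)=\sum_{i=2}^{n}(g-k_i/2)$, whereas you short-circuit this by observing (via linearity of the elimination in the $\al_{(i)}$) that the polynomial multiplying $\al_{(i)}$ at each odd step is exactly the corresponding $C_{i,j}$; both arguments rest on the same triangularity, and your identification makes the matching of upper and lower bounds slightly more transparent.
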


\begin{proof}
In the general case, when the parameterizing functions $f_i$ have arbitrary even $P$-vanishing orders, we write
{\small
\begin{align*}
f_1&=t^2+a_{1,3}t^3+a_{1,4}t^4+ \cdots \\
f_2&=t^{k_2}+a_{2,k_2+1}t^{k_2+1}+a_{2,k_2+2}t^{k_2+2}+ \cdots \\
f_3&=t^{k_3}+a_{3,k_3+1}t^{k_3+1}+a_{3,k_3+2}t^{k_3+2}+ \cdots \\
     &\ \ \ \ \ \ \vdots\\
f_n &=t^{k_n}+a_{n,k_n +1}t^{k_n +1}+a_{n+1,k_n+2}t^{k_n+2}+ \cdots
\end{align*}
}
\hspace{-3pt}and we set $l_i:=k_i/2$, $d_i:=l_{i+1}-l_i-1$ for all $2 \leq i \leq n-1$, and $d_n:=g-l_n-1$. As a shorthand, we will write $\al_i$ (resp., $\be_i$) in place of $\al_{(i)}$ (resp., $\al_{(1^i)})$ in what follows. We then have
{\small
\begin{align}
G&=(\alpha_{2} + \be_{l_2})t^{2l_2} +(\alpha_{2} a_{2,2l_2+1}  +\be_{l_2} l_2 a_{1,3})t^{2l_2+1}+\cdots \label{lab1} \\
  & \ \ \ \ \ + \beta_{l_2 +1} t^{2(l_2+1)} + \beta_{l_2 +1} (l_2+1) a_{1,3} t^{2(l_2+1)+1} + \cdots \label{lb1} \\
  &\ \ \ \ \ \ \ \ \ \ \ \ \ \ \ \ \ \ \ \ \  \ \ \ \  \ \vdots \nonumber\\
  & \ \ \ \ \  + \beta_{l_2+d_2}t^{2(l_2+d_2)} + \beta_{l_2 +d_2} (l_2+d_2) a_{1,3} t^{2(l_2+d_2)+1}+\cdots  \label{lb2} \\
   &\ \ \ \ \ \ \ \ + (\alpha_3 + \beta_{l_3})t^{2l_3} + (\alpha_3 a_{3,2l_3+1} +\beta_{l_3} l_3 a_{1,3})t^{2l_3+1}+\cdots   \label{lab2} \\
  &\ \ \ \ \ \ \ \ + \beta_{l_3 +1} t^{2(l_3+1)} + \beta_{l_3 +1} (l_3+1) a_{1,3} t^{2(l_3+1)+1} + \cdots \nonumber \\
  &\ \ \ \ \ \ \ \ \ \ \ \ \ \ \ \ \ \ \ \ \  \ \ \ \  \ \ \ \ \vdots \nonumber\\
  &\ \ \ \ \ \ \ \  + \beta_{l_3+d_3} t^{2(l_3+d_3)} + \beta_{l_3 +d_3} (l_3+d_3) a_{1,3} t^{2(l_3+d_3)+1} + \cdots \nonumber\\  
  &\ \ \ \ \ \ \ \ \ \ \ \ \ \ \ \ \ \ \ \ \  \ \ \ \  \ \ \ \ \ \ \ \vdots \nonumber\\
&\ \ \ \ \ \ \ \ \ \ \ \ \ \ \ \ \ \ \ \ \  \ \ \ \  \ \ \ \ \ \ \ \vdots \nonumber\\
 & \ \ \ \ \ \ \ \ \ \ + (\alpha_n + \beta_{l_n})t^{2l_n} + (\alpha_n a_{n,2l_n+1} +  
\beta_{l_n} l_n a_{1,3})t^{2l_n+1}+\cdots\nonumber \\
  &\ \ \ \ \ \ \ \ \ \  + \beta_{l_n +1} t^{2(l_n+1)} + \beta_{l_n +1} (l_n+1) a_{1,3} t^{2(l_n+1)+1}+\cdots  \nonumber\\
  &\ \ \ \ \ \ \ \ \ \ \ \ \ \ \ \ \ \ \ \ \  \ \ \ \  \ \ \ \ \ \ \ \ \ \ \nonumber\vdots \\
  &\ \ \ \ \ \ \  \ \ \ + \beta_{l_n+d_n} t^{2(l_n+d_n)} + \beta_{l_n +d_n} (l_n+d_n) a_{1,3} t^{2(l_n+d_n)+1}+\cdots \nonumber.
\end{align}
}

\medskip
{\fl The} gaps of the semigroup $\sss$ of $P$ determine conditions beyond ramification, inasmuch as they may never arise as $P$-vanishing orders of elements of $\op$. 
Our method consists in forcing vanishing to successively higher orders; the steps of the associated process are indexed by gaps of $\sss$. In the first step we start with the gap $2l_2+1$, which is the first gap that appears as an exponent in our putative expansion of $G$ with generic coefficients. Accordingly we choose 
\begin{equation}
\label{equbt2}
\beta_{l_2} = -\alpha_2
\end{equation}
in order to eliminate the term of degree $2l_2$. The term of degree $2l_2+1$ now must vanish, which forces
$$\alpha_2 G_1=0$$ 
where $G_1=a_{2,2l_2+1}- l_2 a_{1,3}$. In particular, $G_1=0$ is the unique condition enforced by the gap $2l_2+1$. 
Moreover, the coefficient of each monomial $t^k$ appearing in line (\ref{lab1}) is computed by a linear form $\ell_{1,k}(\alpha_2)$ in $\al_2$.

{\flushleft Similarly}, in a second step indexed by the gap $2(l_2+1)+1$, we set
\begin{equation}
\label{equba1}
\beta_{l_2+1} = -\ell_{1,2(l_2+1)}(\alpha_2)
\end{equation}
in order to eliminate the term of degree $2(l_2+1)$. Eliminating the term in degree $2(l_2+1)+1$ now forces
$$ \beta_{l_2 +1} (l_2+1) a_{1,3}+\ell_{1,2(l_2+1)+1}(\alpha_2)=0$$ 
which in turn implies that
$$
\alpha_2G_2 =0
$$
where $G_2$ is a polynomial on the coefficients of the (parameterizing functions of the) curve. So we obtain 1 additional condition, namely, $G_2=0$. Moreover, in light of (\ref{equba1}), we see that any coefficient of a monomial $t^k$ appearing in line (\ref{lb1}) is computed by a linear form $\ell_{2,k}(\alpha_2)$.

{\fl More generally,} each step through the $(d_2+1)$th (indexed by the gap $2(l_2+d_2)+1$) produces one additional independent condition; and the coefficients of the powers $t^k$ in every line until and including (\ref{lab2}) are computed by linear forms $\ell_{3,k}(\alpha_2),\ldots,\ell_{d_2+1,k}(\alpha_2)$.

{\fl At} the $(d_2+2)$th step, we set 
\begin{equation}
\label{equba2}
 \beta_{l_3}= -\alpha_3 - \sum_{i=1}^{d_2+1}\ell_{i,2l_3}(\alpha_2)
\end{equation}
in order to eliminate the term of degree $2l_3$. This, in turn, forces
$$ 
(\alpha_3 a_{3,2l_3+1} +\beta_{l_3} l_3 a_{1,3})+ \sum_{i=1}^{d_2+1}\ell_{i,2l_3+1}(\alpha_2)=0
$$ 
in order to obtain vanishing in degree $2l_3+1$. We now obtain
$$
\alpha_2 G_{d_2+2}+\alpha_3 G_{d_2+3} =0
$$
where $G_{d_2+2}$ and $G_{d_2+3}$ are polynomials in the coefficients of (the parameterizing functions of) the curve. So we get 2 additional conditions, namely $G_{d_2+2}=0$ and $G_{d_2+3}=0$. We see, moreover, that every coefficient of a monomial $t^k$ in line (\ref{lab2}) is computed by a linear form $\ell_{l_3,k}(\alpha_2,\alpha_3)$: indeed, any such coefficient depends a priori on $\alpha_3$ and $\beta_{l_3}$, but according to (\ref{equba2}), $\beta_{l_3}$ depends linearly on the $\alpha_2$ and $\al_3$.

{\fl Iterating our selection procedure} ultimately yields at most 
$\sum_{i=2}^{n}(i-1)(d_i+1)$ conditions, generated by the $G_i$'s at every step. Here
{\small
\[
\begin{split}
\sum_{i=2}^{n}(i-1)(d_i+1)&=\sum_{i=2}^{n-1}(i-1)(l_{i+1}-l_{i})+(n-1)(g-l_n)\\                                                                                &=\sum_{i=2}^{n-1} (i-1)\left(\frac{k_{i+1}-k_{i}}{2}\right)+(n-1)\left(g-\frac{k_n}{2}\right) \\ 
&=\sum_{i=2}^{n}\left(g-\frac{k_{i}}{2}\right).
\end{split}
\]
}

\noindent 
The argument above shows that the polynomials $G$ impose at most $\sum_{i=2}^n \left(g-\frac{k_i}{2}\right)$ algebraically independent conditions beyond ramification. On the other hand, the proof of Theorem~\ref{unibranch_theorem} establishes that there are at least this number of algebraically independent conditions, all imposed by the polynomials $F_i$ and $F_{i,j}^*$. But as the $F_i$ and $F_{i,j}^*$ are $G$-polynomials, we conclude that the $G$-polynomials impose \emph{exactly} $\sum_{i=2}^n \left(g-\frac{k_i}{2}\right)$ algebraically independent conditions beyond ramification.

\end{proof}

\justifying

\begin{prop}\label{thmut2}
Let $\mathcal{V}$ be the variety of rational curves with a unique singularity that is a hyperelliptic cusp. Suppose that $n \leq 2g$ and $d \geq \max(n,2g-2)$; then
\[
{\rm cod}(\mathcal{V},M^n_d) = (n-1)g
\]
and each fixed-ramification substratum $\mc{V}_{\bf k} \sub \mc{V}$ is unirational of codimension $(n-1)g+\sum_{i=2}^n\left(\frac{k_i}{2}- i\right)$ whenever $g \leq 7$.
\end{prop}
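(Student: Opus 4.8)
The plan is to assemble Proposition~\ref{thmut2} from the three ingredients already in place: the lower bound from Theorem~\ref{unibranch_theorem}, the matching upper bound from the $G$-polynomial count in Lemma~\ref{lemleq}, and the computer-assisted verification (in the range $g \leq 7$) that the $G$-polynomials generate \emph{all} conditions beyond ramification. Concretely, fix a ramification profile ${\bf k}=(0,2,k_2,\dots,k_n)$ with each $k_i \in \langle 2, 2g+1\rangle$ and $k_i < 2g$ (so that $\mc V_{\bf k} \neq \emptyset$), and recall from \eqref{codimension} that ${\rm cod}(\mc V_{\bf k}, M^n_d) = r_P + b_P - 1$ with $r_P = \sum_{i=1}^n(k_i - i)$ fixed. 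Thus the entire content is the claim $b_P = \sum_{i=2}^n\bigl(g - \tfrac{k_i}{2}\bigr)$, since substituting this into \eqref{ramification}--\eqref{codimension} and summing $r_P + b_P - 1$ gives exactly $(n-1)g + \sum_{i=2}^n(\tfrac{k_i}{2} - i)$; and specializing to the minimal profile $\mathbf{k}=(0,2,4,\dots,2n)$ recovers ${\rm cod}(\mc V, M^n_d) = (n-1)g$ (this is the stratum of smallest codimension, and one checks it dominates $\mc V$).

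First I would establish the inequality $b_P \geq \sum_{i=2}^n\bigl(g - \tfrac{k_i}{2}\bigr)$: this is precisely Theorem~\ref{unibranch_theorem} together with \eqref{ramification}--\eqref{codimension}, using the explicitly exhibited independent conditions $C_{i,j} = 0$ coming from the polynomials $F_i, F^{\ast}_{i,j}$. Next I would establish the reverse inequality in the regime $g \leq 7$. Here the point is that, by the iterative argument in Example~\ref{exept1} and the pattern recorded in Table~\ref{G_and_H_polys}, every algebraic condition imposed on the coefficients $a_{i,j}$ by the requirement $v_t(F) \in \sss$ — for $F$ ranging over the universal element of $\op$ — is, after the linear substitutions that kill lower-valuation terms, a $\mb{C}$-linear combination of the $G$-polynomials; in particular the $H$-polynomials (and all higher-order children) are \emph{contained} in the $\mb C$-span of the $G$-polynomials. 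Granting that containment (which is what the Macaulay2 computation certifies for $g\leq 7$), Lemma~\ref{lemleq} bounds the number of independent conditions beyond ramification by $\sum_{i=2}^n\bigl(g-\tfrac{k_i}{2}\bigr)$, so the two inequalities force equality $b_P = \sum_{i=2}^n(g - \tfrac{k_i}{2})$ and hence the stated codimension.

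For unirationality of each $\mc V_{\bf k}$, I would exhibit an explicit dominant rational map from an affine space onto $\mc V_{\bf k}$. The recipe is the one sketched in the introduction: the free parameters are the coefficients $a_{i,j}$ of the parameterizing power series $(f_1,\dots,f_n)$ that are \emph{not} pinned down by the conditions $C_{i,j}=0$ — together with the remaining global coefficients of the homogeneous parameterizing polynomials away from the cusp, and the $\mathrm{PGL}$-type freedom of the frame. Because each $C_{i,j}$ is \emph{linear} in its top variable $a_{i,k_i+2j-1}$ (with the lower-indexed $a$'s as coefficients, as noted in the proof of Theorem~\ref{unibranch_theorem}), one can solve for $a_{i,k_i+2j-1}$ recursively in $j$, expressing the constrained coefficients as rational functions of the free ones; this realizes $\mc V_{\bf k}$, birationally, as the image of an affine space, whence it is unirational. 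One must also note that the conditions $C_{i,j}=0$ for $g\leq 7$ really do cut out $\mc V_{\bf k}$ set-theoretically (again the content of the Macaulay2 certification that no conditions beyond the $G$-polynomials appear), so that no further constraints obstruct the parameterization.

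The main obstacle is the reverse inequality for $b_P$, i.e.\ proving that the $G$-polynomials generate \emph{all} conditions beyond ramification rather than merely a lower-bounding subset of them. For general $g$ this is exactly the open combinatorial problem flagged after Table~\ref{G_and_H_polys} — that the ``$H \subseteq G$'' containment pattern persists for all $g$ and $n$. For $g \leq 7$ it is settled by a finite (but already substantial) Gröbner-basis computation enumerating the finitely many relevant monomials in the $f_i$ of $t$-valuation $\leq 2g-1$ and checking that the resulting ideal of conditions beyond ramification coincides with the ideal generated by the $G$-polynomials; this is what restricts the clean statement to $g\leq 7$, and it is the step I expect to be delicate to present cleanly, since it is the one place the argument is not self-contained.
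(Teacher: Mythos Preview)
Your proposal is correct and follows essentially the same approach as the paper's own proof: invoke Theorem~\ref{unibranch_theorem} for the lower bound, appeal to the Macaulay2 verification that the $H$-polynomials lie among the $G$-polynomials for $g \leq 7$ (so that, via Lemma~\ref{lemleq}, the $C_{i,j}$ generate all conditions beyond ramification), and deduce unirationality from the fact that each $C_{i,j}$ is linear in its top variable $a_{i,k_i+2j-1}$. The paper's proof is terser but makes exactly these moves; your presentation simply spells out the logical structure more explicitly.
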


\begin{proof}
We verified using Macaulay2 that 
the set of $H$-polynomials is contained in the set of $G$-polynomials for every $3 \leq n \leq g-1$ whenever $4 \leq g \leq 7$; see \cite[ancillary file]{ArqCont}. This means, in turn, that the algebra of conditions imposed by hyperelliptic cusps is generated by the leading coefficients $C_{i,j}$ of the polynomials $F^{\ast}_{i,j}$ introduced in the proof of Theorem~\ref{unibranch_theorem}. The unirationality of $\mc{V}_{\bf k}$ now follows from the fact that each $C_{i,j}$ is linear in the variable $a_{i,k_i+2j-1}=[k_i+2j-1]f_i$ of the ``universal" parameterization $f$ with ramification profile ${\bf k}$.
\end{proof}

\begin{rem}
The area of the rectangle determined by columns 2 through $n$ of our Dyck diagram (of conditions contributing to $b_P$ and $r_P$) is precisely $(n-1)g$, and in our graphical interpretation all of the corresponding boxes are marked; cf. Figure 1.
\end{rem}

\section{Counting conditions imposed by $\gamma$-hyperelliptic cusps}\label{gamma_hyperelliptic}
In this section, using (the proof of) Theorem~\ref{unibranch_theorem} as a template, we establish a lower bound on the number of conditions imposed on rational curves by a $\ga$-hyperelliptic cusp of genus $g$ whose value semigroup is of maximal weight. Fernando Torres proved \cite{To2} that whenever $g \gg \ga$, the unique numerical semigroup with this property is ${\rm S}_{g,\ga}=\langle 4,4\ga+2,2g-4\ga+1 \rangle$.

\begin{thm}\label{thmgam}
Let $\mathcal{V}_{\sss_{g,\gamma}}:=M^n_{d,g;{\rm S}_{g,\ga}} \subset M^n_{d,g}$ denote the subvariety consisting of rational curves with a single singularity $P$ that is a $\gamma$-hyperelliptic cusp with value semigroup ${\rm S}_{g,\ga}$, $\ga>0$. Assume as before that $n \leq 2g$, $d \geq \max(2g-2,n)$ and, moreover, that 
$g \geq 4\ga+2$. Then
\[
\begin{split}
{\rm cod}(\mc{V}_{{\rm S}_{g,\ga}},M^n_d)  &\geq (n-1)g- \de_{n \leq \ga} (2\ga+n-j^{\ast \ast}-3)- \de_{n \geq \ga+1} (3\ga-j^{\ast \ast}-2) 
\end{split}
\]
where $\de$ is Dirac's delta and $j^{\ast \ast}$ is either the unique nonnegative integer for which $g \in (6\ga-2j^{\ast \ast}-1,6\ga-2j^{\ast \ast}+1]$ or else $j^{\ast \ast}=-1$.
\end{thm}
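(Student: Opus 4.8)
The plan is to adapt the template established in the proof of Theorem~\ref{unibranch_theorem} and Lemma~\ref{lemleq}, replacing the hyperelliptic semigroup $\langle 2,2g+1\rangle$ by the maximal-weight $\ga$-hyperelliptic semigroup $\sss_{g,\ga}=\langle 4,4\ga+2,2g-4\ga+1\rangle$. As before, we write ${\rm cod}(\mc{V}_{\sss_{g,\ga}},M^n_d)=r_P+b_P-1$, where $r_P=\sum_{i=1}^n(k_i-i)$ is the ramification at the cusp and $b_P$ is the number of independent conditions beyond ramification; so it suffices to produce, from a universal power-series parameterization $(f_1,\dots,f_n)$ with prescribed $P$-vanishing orders $\mathbf{k}$, an explicit packet of polynomial conditions in the coefficients $a_{i,j}$ whose cardinality, after adding back $r_P$, yields the stated bound. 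As in the hyperelliptic case we may assume $k_i<2g$ for all $i$ (larger vanishing orders only increase $r_P$ in a way compatible with the estimate), and we may fix $f^{-1}(P)=(0:1)$ and normalize each $f_i$ to be monic of order $k_i$ in $t$.

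First I would set up the combinatorial bookkeeping dictated by $\sss_{g,\ga}$: its gaps fall into the ``small'' range below $4\ga+2$ (governed by the generators $4$ and $4\ga+2$, hence roughly alternating in a period-$4$ pattern) and the ``large'' range above, where the extra generator $2g-4\ga+1$ plugs one odd residue class and the remaining odd numbers up to $2g-1$ are gaps. Following the Dyck-path picture of Figure~2, each column $i$ (indexed by $f_i$) contributes upward steps corresponding to successively forcing $v_t$ of a universal combination $F$ past each gap; the $j$-th such step produces a polynomial condition $C_{i,j}$ that is \emph{linear} in a fresh coefficient $a_{i,\bullet}$, exactly as the $F^*_{i,j}$ were in Theorem~\ref{unibranch_theorem}. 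The count of these boxes, summed over columns $2,\dots,n$, is the area of a rectangle of width $n-1$ and height $g$ \emph{minus} the boxes that lie below the Dyck path in the small range — and it is precisely this deficit, which depends on whether $n\le\ga$ or $n\ge\ga+1$ and on the position of $g$ modulo the period relevant to the generator $2g-4\ga+1$, that produces the correction terms $\de_{n\le\ga}(2\ga+n-j^{**}-3)$ and $\de_{n\ge\ga+1}(3\ga-j^{**}-2)$. I would make $j^{**}$ precise exactly as in the statement, via the interval $g\in(6\ga-2j^{**}-1,\,6\ga-2j^{**}+1]$, and then verify that the arithmetic of the rectangle-minus-deficit collapses to the claimed closed form, just as the telescoping sum $\sum(i-1)(d_i+1)=\sum_{i=2}^n(g-k_i/2)$ did in Lemma~\ref{lemleq}.

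The key step — and the main obstacle — is the linear-independence argument: one must check that as $F$ is expanded and vanishing is forced through each gap, the conditions genuinely introduced by the generators $f_2,\dots,f_n$ (the analogues of the $G$-polynomials) are not already consumed by the conductor, by the already-imposed conditions, or by the powers of $f_1$ and the ``$H$-type'' cross terms $f_1^af_{4\ga+2}^b$ coming from the structure of $\sss_{g,\ga}$. Concretely, I would argue exactly as in Lemma~\ref{lemleq}: after eliminating the term of $t$-degree equal to each non-gap by a linear choice of the coefficient $\beta$ multiplying the appropriate power of $f_1$ (or of $f_{4\ga+2}$), the next, gap-indexed coefficient is forced to vanish, and it depends linearly on a coefficient $a_{i,\bullet}$ of $f_i$ that has not yet appeared; hence the conditions $C_{i,j}=0$ are algebraically independent. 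The delicate part is organizing the induction so that the ``reuse'' phenomenon tabulated in the hyperelliptic case (the $H$-polynomials being contained in the $G$-polynomials) is accounted for correctly in the $\ga$-hyperelliptic setting, since here the role of $f_1^j$ is split between powers of the degree-$4$ generator and the degree-$(4\ga+2)$ generator; getting the deficit count right in the small range $[2,4\ga]$, where the semigroup is genuinely two-generated, is where the case split on $n$ versus $\ga$ and the auxiliary index $j^{**}$ enter, and is the step I expect to require the most care. The hypothesis $g\ge 4\ga+2$ guarantees we are in Torres' stable range where $\sss_{g,\ga}$ has the stated three-generator form, and $n\le 2g$ again ensures all relevant $k_i$ are forced even, which keeps the Dyck-path picture clean.
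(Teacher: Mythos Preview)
Your outline captures the correct high-level template---work stratum by stratum in ${\bf k}$, write ${\rm cod}=r_P+b_P-1$, and inductively walk up Dyck columns to manufacture conditions linear in fresh coefficients---but it misses the two structural features of ${\rm S}_{g,\ga}=\langle 4,4\ga+2,2g-4\ga+1\rangle$ that drive the actual argument and produce the correction terms in the bound.

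First, the minimal generator $2g-4\ga+1$ is \emph{odd}, so when ${\bf k}$ consists of even entries (the generic, codimension-minimizing case) no monomial in $f_1,\dots,f_n$ has $t$-valuation $2g-4\ga+1$. This means your column-walking procedure \emph{stalls}: once an inductively-built polynomial $F$ reaches valuation $2g-4\ga$, there is nothing to subtract to push it past $2g-4\ga+1$. The paper resolves this by borrowing a polynomial of the same valuation from a \emph{different} column, i.e.\ replacing $F^s$ by $F^s-F^{s^{\prime}}$; the net effect is that one entire column's worth of conditions above $2g-4\ga+1$ is sacrificed, producing a global correction of $\rho(2g-4\ga+1)=\ga$. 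Your proposal does not mention this obstruction at all, and without it the count is simply wrong.

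Second, your explanation of where $j^{\ast\ast}$ and the $n\le\ga$ versus $n\ge\ga+1$ dichotomy come from is off. They do not arise from a ``deficit in the small range $[2,4\ga]$''. The index $j^{\ast\ast}$ enters through $\rho(8\ga+4)$: the element $8\ga+4=\mathrm{lcm}(4,4\ga+2)$ is the unique $s\in{\rm S}_{g,\ga}$ that can admit \emph{three} irreducible decompositions (with underlying partitions $(4^{2\ga+1})$, $((4\ga+2)^2)$, and possibly $(8\ga+4)$ itself), so the polynomial $G:=f_{j^{\ast}}^2-f_1^{2\ga+1}$ contributes an extra packet of $\rho(8\ga+4)$ conditions, and it is the dependence of $\rho(8\ga+4)$ on whether $g$ lies in $(6\ga-2j^{\ast\ast}-1,6\ga-2j^{\ast\ast}+1]$ that gives the stated formula. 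The case split on $n$ versus $\ga$, meanwhile, comes from the ramification term $4\ga+2-j^{\ast}$ attached to the column $k_{j^{\ast}}=4\ga+2$: when $n\ge\ga+1$ one can take $j^{\ast}=\ga+1$, otherwise $j^{\ast}=n$. Until you organize the argument around irreducible decompositions of each $s\in{\rm S}_{g,\ga}$ and single out $8\ga+4$ and $2g-4\ga+1$ for special treatment, the arithmetic will not collapse to the claimed closed form.
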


\begin{rem}
The hypothesis that $g \geq 4\ga+2$ is made in order to ensure that $8\ga+4 \leq 2g$, which slightly simplifies the exposition below. Note that $2g-4\ga+1>4\ga+2$, i.e. $g \geq 4\ga+1$, is automatic, because ${\rm S}_{g,\ga}$ is $\ga$-hyperelliptic by assumption.
\end{rem}

\begin{proof}
The analysis required to produce a lower codimension bound is more delicate than in the $\ga=0$ case, because of the structure of the underlying semigroup ${\rm S}_{g,\ga}$. We work locally near a $\ga$-hyperelliptic cusp $P$ of a curve $[C] \in \mc{V}_{{\rm S}_{g,\ga}}$ with $P$-vanishing order vector ${\bf k}=(k_0,\dots,k_n)$; that is, $[C]$ belongs to $\mc{V}_{{\bf k}}: =M^n_{d,g;{\rm S}_{g,\ga},{\bf k}}$.
Without loss of generality, we may assume $k_0=0$, $k_1=4$, and that $k_{j^{\ast}}=4\ga+2$ for some positive integer $j^{\ast} \leq \ga+1$. Abusively, hereafter we refer to the local incarnation of $f$ as $(f_1(t),\dots,f_n(t))$, in which $f_i=t^{k_i}+ \sum_{\ell \geq k_i+1} a_{i,\ell} t^{\ell}$ for all $i=1,\dots,n$, for some local coordinate $t$ centered in $P$.
The arithmetic structure of ${\rm S}_{g,\ga}$ interacts with the parameterization $f$ underlying $C$ via the following device.

\begin{dfn}\label{defred}
\emph{Given a distinct set of natural numbers $k_1,\dots,k_n$, a \emph{decomposition} of $s\in\mathbb{N}$ with respect to $k_1,\dots,k_n$ is an equation
\begin{equation}\label{decomposition}
s=m_1k_1+\ldots+m_{n}k_{n}
\end{equation}
with non-negative integer coefficients $m_j, j=1, \dots,n$. Its \emph{underlying partition} is $(k_1^{m_1},\dots, k_n^{m_n})$. 
A decomposition as in \eqref{decomposition} is \emph{reducible} whenever some proper sub-sum of the right-hand side of \eqref{decomposition} decomposes with respect to $k_1,\dots,k_n$;
otherwise it is \emph{irreducible}.} 
\end{dfn}

The following auxiliary notion will also be useful.
\begin{dfn}\label{defrho}
\emph{Given an element $s$ of a numerical semigroup $\sss$, we set
$$
\rho(s):=\#\{r>s\,|\, r\not\in\sss\}.
$$}
\end{dfn}

{\fl \bf Case 1:} {\it ${\bf k}$ consists entirely of even integers}. 
As in the $\ga=0$ case, we have
\[
{\rm cod}(\mathcal{V}_{\bf k},M^n_d) = r_P+b_P-1
\]
where 
$r_P=\sum_{i=1}^n (k_i -i)$ is the ramification of $f$, and $b_P$ is the number of independent conditions beyond ramification imposed by ${\rm S}_{g,\ga}$ on $f$. These conditions beyond ramification are induced by polynomials in the $f_i$ indexed by irreducible decompositions of elements $s \in {\rm S}_{g,\ga}$ with parts $k_1, \dots, k_n$. 

\medskip
{\bf Subcase 1.1: $s \leq 4\ga$.} Note that $s$ is a multiple of 4 in this range. When $s \notin \{k_2,\dots,k_{j^{\ast}-1}\}$, $s$ admits a unique irreducible decomposition with respect to $k_1,\dots, k_n$, whose underlying partition is $(4^{s/4})$; $s$ then gives zero net contribution to the codimension of $\mc{V}_{\bf k}$. Now say $s=k_j$ for some $j \in [2,j^{\ast}-1]$. Then $s$ contributes (at least) $\rho(s)$ independent conditions to $b_P$. To see this, we begin much as in the $\ga=0$ case by setting $F_j:= f_j-f_1^{\frac{k_j}{4}}$. Then $v_t(F_j)$ is at least $k_j+1$, which belongs to $\mb{N} \setminus {\rm S_{g,\ga}}$ and is thereby precluded.
By the same logic, we have
\begin{equation}\label{k_j_conditions}
[t^{k_j+1}]F_j= [t^{k_j+2}]F_j= \dots= [t^{k_j^{\ast}-1}]F_j=0
\end{equation}
where $k_j^{\ast}$ is the smallest element of ${\rm S}_{g,\ga}$ strictly greater than $k_j$. Independence of the linear vanishing conditions \eqref{k_j_conditions} is clear. On the other hand, once the conditions \eqref{k_j_conditions} have been imposed, we have $v_t(F_j)= k_j^{\ast}$, since the remaining nonzero coefficients of $F_j$ are generic. If $k_j<4\ga$, we now iterate this procedure, setting $F_j^{\ast}:= F_j- [t^{k_j^{\ast}}]F_j \cdot f_1^{\frac{k_j^{\ast}}{4}}$. Replacing $F_j$ by $F_j^{\ast}$ and $k_j$ by $k_j^{\ast}$ yields a set of vanishing conditions analogous to \eqref{k_j_conditions}. On the other hand, if $k_j=4\ga$, then $k_j^{\ast}=k_{j^{\ast}}= 4\ga+2$, and we set $F_j^{\ast}:= F_j- [t^{4\ga+2}]F_j \cdot f_{j^{\ast}}$, whose leading term must vanish. We then iterate by replacing $F_j$ by $F_j^{\ast}$ and subtracting a scalar multiple of any monomial in powers of $f_1$ and $f_{j^{\ast}}$ with valuation equal to that of (the new version of) $F_j$. Our procedure continues in this way until all gaps of ${\rm S}_{g,\ga}$ greater than $k_j$ and less than $2g-4\ga$ have been exhausted, and the conditions obtained are algebraically independent; indeed, the {\it linear} part of the condition imposed by a given gap $q \in \mb{N}_{>k_j} \setminus {\rm S}_{g,\ga}$ is precisely $a_{j,q}- \frac{k_j}{4} a_{1,q+4-k_j}$, so new variables appear {\it linearly} in the coefficients that are required to vanish at every step.

\medskip
Our iterative procedure may be interpreted graphically with respect to the Dyck path $\mc{P}$ associated with ${\rm S}_{g,\ga}$ inside its $g \times g$ bounding box. There is a horizontal step in $\mc{P}$ labeled by $k_j$; so $k_j$ singles out a column in the Dyck diagram, and $\rho(k_j)$ is the vertical distance to the top of that column. In particular, we have $\rho(k_j)=g-\frac{3}{4}k_j$; at first glance, it might seem natural to guess that the contribution of $s=k_j$ to $r_P+b_P$ is $g+ (\frac{1}{4}k_j-j)$. Note, in particular, that this contribution is at least $g$, with equality if and only if $k_j=4j$.

\medskip
While this is indeed a useful approximation it is not quite correct, as $2g-4\ga+1 \in {\rm S}_{g,\ga}$ is not realizable as a positive linear combination of $k_1, \dots, k_n$. The upshot of this is that it is impossible to continue inductively walking up the Dyck column indexed by $k_j$ simply by adding monomials in $f_1,\dots, f_n$ to $F_j$, $F_j^{\ast}$ at each stage, since no {\it monomial} in $f_1,\dots, f_n$ has valuation equal to $2g-4\ga+1$. Rather, in order to continue ascending the column indexed by $k_j$ ``past" $2g-4\ga+1$, it is necessary to leverage the other columns, and their inductively-constructed polynomials. We will return to this issue momentarily.

\medskip
{\bf Subcase 1.2: $4\ga+2 \leq s$ and $s \neq 8\ga+4$.} 
In this range, $s$ again admits either $2$ or $1$ distinct irreducible decompositions with respect to $k_1, \dots, k_n$, depending upon whether $s$ belongs to $\{k_{j^{\ast}+1},\dots,k_n\}$ or not. If $s \notin \{k_{j^{\ast}+1},\dots,k_n\}$, then $s$ gives zero net contribution to the codimension of $\mc{V}_{\bf k}$; so without loss of generality we may assume $s=k_j$ for some $j \in [j^{\ast}+1,n]$. Then $s$ has irreducible decompositions with underlying partitions $(k_j)$ and $(4^{\frac{k_j}{4}})$ (resp., $(4\ga+2,4^{\frac{k_j-(4\ga+2)}{4}})$) depending upon whether $k_j$ is divisible by 4 or not. Correspondingly, we define $F_j:= f_j-f_1^{\frac{k_j}{4}}$ (resp., $F_j:=f_j-f_{j^{\ast}} f_1^{{\frac{k_j-(4\ga+2)}{4}}}$). We now inductively ``walk" up the column of the Dyck diagram indexed by $k_j$ following the same inductive procedure as in Subcase 1.1. To a first approximation, it is useful to imagine that every gap of ${\rm S}_{g,\ga}$ strictly greater than $k_j$ imposes a condition that depends linearly on a previously-unseen variable, and there are $\rho(k_j)$ of these. The precise value of $\rho(k_j)$ depends on how large $k_j$ is relative to $2g-4\ga+1$; writing $k_j=4\ga+2\ell$ for some $\ell \geq 0$, we have

\[
\rho(k_j)= (g-3\ga-\ell) \de_{\ell < g-4\ga}+ \sum_{m=0}^{\ga-1} (\ga - m) \de_{g-4\ga+2m \leq \ell \leq g-4\ga+2m+1}.
\]

If $k_j$ is divisible by 4, the linear part of the condition indexed by $q \in \mb{N}_{>k_j} \setminus {\rm S}_{g,\ga}$ is $a_{j,q}-\frac{k_j}{4} a_{1,q+4-k_j}$ as in Subcase 1.1 above; otherwise, the linear part of the condition indexed by $q \in \mb{N}_{>k_j} \setminus {\rm S}_{g,\ga}$ is $a_{j,q}- \frac{k_j-(4\ga+2)}{4} a_{1,q+4-k_j}- a_{j^{\ast},4\ga+2+q-k_j}$. The aggregate contribution $\rho(k_j)+k_j-j$ of $s=k_j$ to $r_P+b_P$ is

\[
(g+\ga+\ell-j) \de_{\ell < g-4\ga}+ \sum_{m=0}^{\ga-1} (5\ga +2\ell-m-j) \de_{g-4\ga+2m \leq \ell \leq g-4\ga+2m+1} \geq g
\]

in which equality holds if and only if $j=\ga+\ell-m^{\ast}$, where $m^{\ast}$ is either the unique integer for which $\ell \in [g-4\ga+2 m^{\ast},g-4\ga+2 m^{\ast}+1]$, or else $m^{\ast}=0$.

\medskip
However, just as in Subcase 1.1, the preceding argument needs to be adjusted because $2g-4\ga+1$ is not realizable as a positive sum of $k_1,\dots,k_n$, so the iterative procedure by which we walk up the column indexed by $k_j$ needs to be adjusted to explain those conditions induced by gaps greater than $2g-4\ga+1$. We will implement this adjustment in a unified way across all subcases following our preliminary analysis of Subcase 1.3.

\medskip
{\bf Subcase 1.3: $s=8\ga+4$.} In this subcase, $s$ admits either 3 or 2 distinct irreducible decompositions with respect to $k_1, \dots, k_n$, depending upon whether or not $s$ belongs to $\{k_{j^{\ast}+1},\dots,k_n\}$. The underlying partitions are $(4^{2\ga+1})$, $(4\ga+2)^2$, and possibly $(8\ga+4)$, if $k_j=8\ga+4$ for some $j$. Correspondingly we set $G:= f_{j^{\ast}}^2-f_1^{2\ga+1}$; and if $k_j=8\ga+4$, we further set $F_j:=f_j-f_1^{2 \ga+1}$. As before, we inductively walk up the column of the Dyck diagram indexed by $s$, perturbing $G$ and $F_j$ by monomials in $f_1,\dots,f_n$ at each step. The valuations of the resulting polynomials continue inscreasing until they reach $2g-4\ga$, at which stage no further iteration is possible, as no monomial in $f_1,\dots,f_n$ has valuation equal to $2g-4\ga+1$. Nevertheless, as a heuristic it is useful to provisionally ignore this obstruction and correct for the overcounting afterwards; in this idealization, each of the (inductively perturbed versions of) $G$ and $F_j$ (when $s=k_j$) would contribute $\rho(8\ga+4)$ algebraically independent conditions, with linear parts of the form $2a_{j^{\ast},q-4\ga-2}- (2\ga+1)a_{1,q-8\ga}$ and $a_{j,q}-(2\ga+1)a_{1,q-8\ga}$ for all $q \in \mb{N}_{>8\ga+4} \setminus {\rm S}_{g,\ga}$, respectively. The value of $\rho(8\ga+4)$ depends on how large $g$ is relative to $\ga$; namely,
\[
\rho(8\ga+4)= \de_{g \geq 6\ga+2}(g-5\ga-2)+ \sum_{j=0}^{\ga-1} (g-5\ga-1+j) \de_{6\ga-2j-1 < g \leq 6\ga-2j+1}.
\]
In this idealization, when $s \notin \{k_{j^{\ast}+1},\dots,k_n\}$, $s$ contributes $g-5\ga-1+j^{\ast \ast}$ to $r_P+b_P$. When $s=k_j$ for some $j$, $s$ contributes 
\[
2(g-5\ga-1+j^{\ast \ast})+ (8\ga+4-j)=2g-2\ga+2+2j^{\ast \ast}-j \geq 2g-5\ga-1+j^{\ast \ast}
\]
to $r_P+b_P$, in which the inequality is equality if and only if $j=3\ga+3+j^{\ast \ast}$.

\medskip
{\bf Conditions beyond $2g-4\ga+1$.} Because the minimal generator $2g-4\ga+1$ of ${\rm S}_{g,\ga}$ does not belong to ${\bf k}$, no monomial in the parameterizing functions $f_i,i=1,\dots,n$ has $t$-valuation $2g-4\ga+1$. In order to continue ascending the Dyck column indexed by a given element $s \in {\rm S}_{g,\ga}$ (and a pair of irreducible decompositions of $s$, that we fix at the outset) ``beyond" $2g-4\ga+1$, we perturb the inductively-constructed polynomial $F^s$ of valuation $2g-4\ga$ by (any) one of the inductively-constructed polynomials $F^{s^{\pr}}$ of valuation $2g-4\ga$ from a column labeled by a distinct element $s^{\pr} \in {\rm S}_{g,\ga}$. More precisely, we replace $F^s$ by $F^s-F^{s^{\pr}}$ and then continue our iterative process just as before, increasing the valuation of our polynomial by adding scalar multiples of monomials in $f_1,\dots,f_n$ at each step. When we do so for every column labeled by some $s \in {\rm S}_{g,\ga}$ that admits at least two irreducible decompositions, the net effect is that the lower bound on the codimension predicted by our naive idealization drops by $\rho(2g-4\ga+1)=\ga$; cf. Theorem~\ref{weak_bound} and its proof below.

\medskip
{\bf Minimizing the total number of conditions.} 

\medskip
{\bf Case 2:} {\it ${\bf k}$ contains odd entries.} Our analysis of conditions imposed by elements $s \in {\rm S}_{g,\ga}$ is identical to that in Case 1 whenever $s$ is even or strictly less than the minimal odd valuation $k_{\widehat{j}}$. Note that $k_{\widehat{j}} \geq 2g-4\ga+1$, and clearly $2g-4\ga+1>4\ga+2$ because $g \geq 4\ga+2$. The element $s=k_{\widehat{j}}$ contributes $k_{\widehat{j}}-\widehat{j}$ algebraically independent ramification conditions to $r_P+b_P$. Note that $k_{\widehat{j}}-\widehat{j} \geq g-\ga$, with equality if and only if $k_{\widehat{j}}=2g-4\ga+1$ and ${\bf k}$ includes all positive elements of ${\rm S}_{g,\ga}$ less than or equal to $2g-4\ga+1$. On the other hand, whenever $s>k_{\widehat{j}}$ and $s$ is odd, $s$ admits either 2 or 1 irreducible decompositions with respect to $k_1,\dots,k_n$, depending upon whether or not $s=k_j$ for some $j$. Once more, we may suppose without loss of generality that $s=k_j$; then $s$ contributes $\rho(k_j)+k_j-j$ algebraically independent conditions to $r_P+b_P$. By virtually the same argument as before, the total number of conditions arising from the parameterization is minimized when the valuation entries $k_j$ determine a consecutive sequence of elements in ${\rm S}_{g,\ga}$, with the caveat that the unique element $s=8\ga+4$ eligible to admit 3 irreducible decompositions might be skipped. Given an odd valuation $k_j= 2g-4\ga+1+ 4\ell$, where $\ell \in [0,\ga-1]$, we have $\rho(k_j)=\ga-\ell$ and therefore
\[
\rho(k_j)+k_j-j= 2g-3\ga+1+ 3\ell- j \geq g
\]
with equality if and only if $j=g-3\ga+1+3\ell$, which means precisely that ${\bf k}$ includes all positive elements of ${\rm S}_{g,\ga}$ less than or equal to $2g-4\ga+1+4\ell$.

\medskip
{\bf Aggregating codimension-minimizing conditions.} When $n \leq 3\ga+1$, every entry $k_j$ of ${\bf k}$ is strictly smaller than $8\ga+4$. Accordingly, we see that ${\rm cod}(\mathcal{V}_{\sss_{g,\gamma}},M^n_d)$ is at least
{\small
\[
\begin{split}
(n-2)g+3+ (4\ga+2-n)+ (g-5\ga-1+j^{\ast \ast}) -\ga-1 = (n-1)g- (2\ga+n-3-j^{\ast \ast}) \\
\text{ if } n \leq \ga; \text{ and }\\
(n-2)g+3+ (4\ga+2-(\ga+1))+ (g-5\ga-1+j^{\ast \ast}) -\ga-1 = (n-1)g- (3\ga-2-j^{\ast \ast}) \\
\text{ if } \ga+1 \leq n \leq 3\ga+1.
\end{split}
\]
}

\noindent But, if $n \geq 3\ga+2$, ${\rm cod}(\mathcal{V}_{\sss_{g,\gamma}},M^n_d)$ is bounded below 
{\small
\[
\begin{split}
(n-3)g+3+ (4\ga+2-(\ga+1))+ (2g-5\ga-1+j^{\ast \ast})-\ga-1 = (n-1)g-(3\ga-2-j^{\ast \ast}), \\
\text{ if } k_j= 8\ga+4 \text{ for some }j.
\end{split}
\]
}

\noindent We conclude that, whenever $n \geq \ga +1$,   
\[{\rm cod}(\mathcal{V}_{\sss_{g,\gamma}},M^n_d) \geq (n-1)g-(3\ga-2-j^{\ast \ast}). \].


\end{proof}

\begin{ex}\label{supporting_example}
\emph{Let $n=4$, $g=11$, and $\ga=2$, so that ${\rm S}=\langle 4,10,15 \rangle$ is the corresponding $\ga$-hyperelliptic semigroup of maximal weight; we will show that when ${\bf k}$ is chosen in such a way to minimize the number of conditions in Theorem~\ref{thmgam}, these conditions are in fact exhaustive. 
To this end, let $f=(f_2,f_4,f_5,f_6)$ denote a general element of $\mc{V}_{(4,8,10,12)}$, where $[t^i]f_j=a_{j,i}$; and let $F$ denote an arbitrary element of 
$\C[f_1,f_2,f_3,f_4]$. 
Without loss of generality we may assume, as in Example~\ref{exept1}, that $v_t(F)\geq 4$. We then have}
{\small
\begin{align*}
F&=
\alpha_{(4)} f_4+\al_{(2,2)} f_2^2 \\
&\ +\alpha_{(5)} f_5 \\
&\ +\alpha_{(6)} f_6+\al_{(4,2)} f_2 f_4 +\al_{(2^3)} f_2^3 \\
&\ +\al_{(5,2)} f_2 f_5 \\
&\ +\al_{(2^4)} f_2^4 + \al_{(4,2^2)} f_2^2 f_4+\al_{(4^2)} f_4^2 +\al_{(6,2)} f_2 f_6 \\
&\ + \al_{(5,2^2)} f_2^2 f_5 + \al_{(5,4)} f_4 f_5 \\
&\ +\al_{(2^5)} f_2^5 + \al_{(4,2^3)} f_2^3 f_4+\al_{(6,4)} f_4 f_6 + \al_{(5^2)} f_5^2 +\al_{(6,2^2)} f_2^2 f_6 +\al_{(4^2,2)} f_2 f_4^2
\end{align*}
}

\noindent \emph{in which the $\alpha_{\la}$ are complex coefficients and the $i$th line in the above diagram corresponds to the homogeneous component of $F$ of fixed valuation $2i<2g$.}

\medskip
\noindent \emph{Expanding $F$ as a power series in $t$, grouping terms of fixed valuation together, 
and implementing the method already applied in Example ~\ref{exept1} and Lemma~\ref{lemleq},}
\emph{we obtain}
$$
F=\psi_{s-1} t^{4s-3}+\phi_{s} t^{4s-2}+\psi_{s}t^{4s-1}+\phi_{s+1} t^{4s}+\psi_{s+1}t^{4s+1}+O(t^{4s+2})
$$ 
\emph{at step $s>3$, and set $\phi_{s}=\psi_{s}=\phi_{s+1}=0$, which in turn allows us to rewrite $\psi_{s+1}$ as a linear combination of polynomials in the coefficients of the parameterizing functions $f_i$.}

\medskip
\noindent \emph{By requiring $F$ to vanish to higher and higher order and recording the polynomials $G_i$ obtained at each step, we build the following table, in which the multipliers $c_1$, $c_2$ and $c_3$ are polynomials in the $a_{i,j}$, and {\it $G$- and $H$-polynomials} reference the decomposition $F=G+H$, where
\[
G=\alpha_{(4)} f_4+\al_{(2^2)} f_2^2+\alpha_{(5)} f_5+\alpha_{(6)} f_6+\al_{(2^3)} f_2^3+\al_{(5,2)} f_2 f_5+\al_{(2^4)} f_2^4+\al_{(2^5)} f_2^5+\al_{(5^2)} f_5^2.
\]
}

\begin{table}[h]\label{table:H-polynomials}
\centering
\begin{tabular}{c|l|l|l|l|l|l|l}
 & 
 \multicolumn{3}{c}{G-polynomials} & 
\multicolumn{4}{|c}{H-polynomials} \\ 
\hline                               
Gap & $\alpha_{(4)}$ & $\alpha_{(6)}$ & $\alpha_{(5^2)}$ & $\alpha_{(4,2)}$ & $\alpha_{(6,2)}$ & $\alpha_{(4^2)}$ & $\alpha_{(5,4)}$  \\
\hline    
9 & $G_1$ & & & & & &  \\
11 & $G_2$ & & & & & &   \\
13 & $G_3$ & $G_4$ &  & $G_1$ &  & &  \\
17 & $G_5$ &    &   & $G_3$ & $G_4$  & &  \\
21 & $G_6+c_1 G_7$ &   & $G_7$ & $G_5$ &   & $c_2 G_7$ & $c_3 G_7$  
\end{tabular}
\vspace{5pt}
\caption{$G$- and $H$-polynimials for $\ga$-hyperelliptic cusps}\label{tab2}
\end{table}

\emph{
}

\noindent \emph{Much as in the hyperelliptic case, the algebra generated by the $G$-polynomials in this example is precisely that generated by the polynomials $F$ and $F^{\ast}$ distinguished by the inductive process of Theorem~\ref{thmgam}; see Figure~\ref{Fig2} for a graphical representation. Inasmuch as every numerical semigroup ${\rm S}$ is $\ga$-hyperelliptic for some $\ga$, it seems natural to speculate that an analogous phenomenon persists for arbitrary ${\rm S}$.}

\end{ex}

\begin{figure}[H] 
\begin{center}
\includegraphics[width=5cm,height=5cm,keepaspectratio]{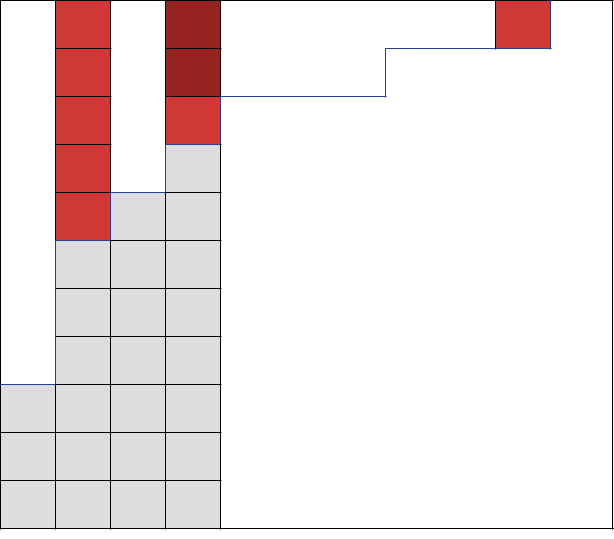}
\end{center}
\caption{Conditions contributing to $b_P$ and $r_P$ for ${\rm S}_{g,\ga}$ when $g=11$, $\ga=2$, and $n=4$. The dark red boxes do not contribute, i.e., they correspond to a correction to account for the fact that $2g-4\ga+1 \notin \rm{Span}({\bf k})$. In this case $\ga^{\ast \ast}=1$.} \label{Fig2}
\end{figure}

\medskip
In order to make this precise, we require one additional device, which will correct for possible ``syzygetic" redundancies among the polynomials $F$ and $F^{\ast}$ that will lead to over-counting otherwise. Namely, let $s_1<\dots<s_p$ denote the nonzero elements of ${\rm S}$ strictly less than the conductor, and let $\{(k_n^{m_{j,n}},\dots,k_1^{m_{j,1}}): j=1,\dots,\psi(s_i)\}$ denote the set of partitions underlying irreducible decompositions of $s_i$, $i=1,\dots,p$. For each $s_i$, let $v_{s_i,j}:=(m_{j,n},\dots,m_{j,1})$ denote the exponent vector of the $j$th indexing partition, $j=1,\dots,\psi(s_i)$. Let $V=V(E)$ denote the vector matroid on
\[
E:=\{v_{s_1,2}-v_{s_1,1},\ldots,v_{s_1,\psi(s_1)}-v_{s_1,1};\,\ldots\ \ldots\, ; v_{s_p,2}-v_{s_p,1},\ldots,v_{s_p,\psi(s_p)}-v_{s_p,1}\}.
\]

Denote the circuits of $V$ by $C_1,\ldots,C_q$ and for each $i=1,\dots,q$, let $s(i)$ be the largest semigroup element among $s_1,\dots,s_p$ for which $v_{s(i),j}-v_{s(i),1}\in C_i$ for some $j$. The {\it syzygetic defect} of $f$ with respect to ${\bf k}$ is
\[
D({\bf k}):= \sum_{i=1}^q\rho(s(i)).
\]
Finally, given $s \in {\rm S}$, let $\rho(s):=\#\{r>s: r\not\in {\rm S}\}$, let $\psi(s)$ denote the number of irreducible decompositions with respect to $(k_1,\dots,k_n)$, and let $\varphi(s):=\max(\psi(s)-1,0)$.
We always assume the ramification profile ${\bf k}=(0,k_1,\dots,k_n)$ in the cusp is fixed in advance.

\begin{conj}\label{gamma_hyp_conj}
Given a vector ${\bf k}=(0,k_1,\ldots,k_n)\in\mathbb{N}_{\geq 0}^{n+1}$, let $\mc{V}_{\bf k} \subset M^n_{d,g; {\rm S}}$ denote the subvariety parameterizing maps $f: \mb{P}^1 \ra \mb{P}^n$ with a unique cusp with semigroup ${\rm S}$ and ramification profile ${\bf k}$. Let $\{s_i^{\ast}\}_{i=1}^{\ell}$ denote the set of minimal generators of ${\rm S}$ strictly less than the conductor that do not appear as entries of ${\bf k}$; let $m= \min(\ell, \sum_{s \in {\rm S}} \varphi(s))$; and suppose $d=\deg(f) \geq \max(n,2g-2)$. Then
\begin{equation}\label{general_cod_estimate}
{\rm cod}(\mc{V}_{\bf k},M^n_d) =\sum_{i=1}^n(k_i-i)+\sum_{s\in {\rm S} \cap [2g]} \varphi(s)\rho(s)- \sum_{i=1}^{m} \rho(s^{\ast}_i)- D({\bf k})-1.
\end{equation}
\end{conj}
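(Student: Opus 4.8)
The plan is to mirror the architecture of the proofs of Theorems~\ref{unibranch_theorem} and~\ref{thmgam}: split the codimension into a ramification contribution and a ``beyond ramification'' contribution, exhibit an explicit packet of polynomials in the parameterizing functions accounting for the latter, and then argue that no further conditions can arise. Concretely, fix a local parameterization $f=(f_1,\dots,f_n)$ of the cusp $P$ with $f_i=t^{k_i}+\sum_{\ell>k_i}a_{i,\ell}t^\ell$ and $f^{-1}(P)=(0:1)\in\mb{P}^1$. As in \eqref{codimension},
\[
{\rm cod}(\mc{V}_{\bf k},M^n_d)=r_P+b_P-1,\qquad r_P=\sum_{i=1}^n(k_i-i),
\]
where $b_P$ counts the algebraically independent conditions on the coefficients $a_{i,\ell}$ forced by the requirement $v_t(\phi(R))={\rm S}$, and the $-1$ absorbs translation of the preimage of $P$ along $\mb{P}^1$. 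The hypothesis $d\geq\max(n,2g-2)$ secures nondegeneracy and, crucially, that every element of $\op$ of valuation $\geq 2g$ lies in the conductor ideal and hence contributes nothing; only the finitely many $s\in{\rm S}\cap[2g]$ are then relevant, so the asserted formula amounts to the statement that $b_P=\sum_{s\in{\rm S}\cap[2g]}\varphi(s)\rho(s)-\sum_{i=1}^{m}\rho(s_i^\ast)-D({\bf k})$. As a sanity check, one verifies that \eqref{general_cod_estimate} recovers the bounds of Theorems~\ref{unibranch_theorem} and~\ref{thmgam} when ${\rm S}=\langle 2,2g+1\rangle$, respectively ${\rm S}={\rm S}_{g,\ga}$.

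For the lower bound on $b_P$, which is the content of Theorem~\ref{weak_bound}, the argument runs as follows. For each $s\in{\rm S}\cap[2g]$, fix one irreducible decomposition of $s$ with respect to $k_1,\dots,k_n$ as a reference; each of the remaining $\varphi(s)$ irreducible decompositions yields a polynomial $F^s\in\C[f_1,\dots,f_n]$ whose leading terms cancel, so $v_t(F^s)>s$, and imposing vanishing of the intervening gap-coefficients raises $v_t(F^s)$ to the next element of ${\rm S}$. One then ``walks up'' the column of the Dyck diagram of ${\rm S}$ indexed by $s$: at each step one subtracts a scalar multiple of a monomial in $f_1,\dots,f_n$ of matching valuation, whereupon the next gap $q\in\mb{N}_{>s}\setminus{\rm S}$ imposes a vanishing condition whose linear part involves a coefficient $a_{i,\cdot}$ not previously constrained, which guarantees algebraic independence. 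This produces $\sum_s\varphi(s)\rho(s)$ candidate conditions, from which two corrections must be subtracted. The first, $\sum_{i=1}^{m}\rho(s_i^\ast)$ with $m=\min(\ell,\sum_s\varphi(s))$, accounts for the minimal generators $s_i^\ast$ of ${\rm S}$ absent from ${\bf k}$: such an $s_i^\ast$ is not a nonnegative combination of $k_1,\dots,k_n$, so no monomial in the $f_i$ has valuation $s_i^\ast$, and a column must be rerouted through another column to pass that height --- a maneuver available only $\sum_s\varphi(s)$ times in all. The second, the syzygetic defect $D({\bf k})=\sum_i\rho(s(i))$, accounts for linear dependences among the distinguished polynomials $F^s$ themselves, encoded by the circuits of the vector matroid $V(E)$ on the differences of exponent vectors of irreducible decompositions; these dependences would otherwise cause over-counting. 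Graphically, the surviving conditions fill the marked boxes of the Dyck diagram; cf.\ Figure~\ref{Fig2}.

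The reverse inequality is the genuinely open direction, and is why the statement is a conjecture rather than a theorem. What remains is to show that the polynomials $F^s$ and their inductively derived children generate the \emph{entire} ideal of conditions beyond ramification. The natural approach, following Example~\ref{exept1} and Lemma~\ref{lemleq}, is to take a ``universal'' element $F\in\op$ of valuation $<2g$, expand it as a $\C$-linear combination of monomials $f_1^{m_1}\cdots f_n^{m_n}$ grouped by valuation, run the same step-by-step vanishing procedure, and verify that every condition arising from a monomial whose underlying partition is \emph{reducible} is an algebraic consequence of the conditions already produced by the distinguished polynomials --- the ``$H\subseteq G$'' phenomenon recorded in Tables~\ref{G_and_H_polys} and~\ref{tab2}. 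I expect this containment to be the main obstacle: it requires a precise accounting of how conditions propagate under multiplication by each $f_i$, showing that their leading linear parts always introduce fresh coefficient variables, so that reducible monomials can never enlarge the ideal. At present it is verified only computationally --- by Macaulay2 in the hyperelliptic case for $g\leq 7$ (Proposition~\ref{thmut2}) and in sample maximal-weight $\ga$-hyperelliptic cases (Example~\ref{supporting_example}) --- and a general proof would presumably demand an induction on the number of parts of a decomposition, combined with a structural classification of the irreducible decompositions of semigroup elements with respect to ${\bf k}$; it is exactly this combinatorial input that is currently missing.
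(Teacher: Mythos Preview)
Your proposal correctly recognizes that this statement is a \emph{conjecture}, not a theorem, and that the paper does not prove it; your sketch of the lower bound direction (the ``walk up the Dyck column'' procedure, the correction for minimal generators $s_i^\ast\notin{\bf k}$, and the syzygetic-defect correction via the vector matroid on exponent-vector differences) is essentially the same as the paper's proof of Theorem~\ref{weak_bound}. Your assessment of the missing direction---that one must show the $H$-polynomials lie in the algebra generated by the $G$-polynomials, and that this is presently only verified computationally in special cases---accurately reflects the paper's own discussion following Example~\ref{supporting_example} and Proposition~\ref{thmut2}.
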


\begin{rem}\label{independent_partitions}
\emph{Certifying whether $\mathcal{V}_{\bf{k}}$ is nonempty in general is slightly delicate, inasmuch as it amounts to the assertion that the value semigroup of a {\it general} parameterization with ramification profile $\bf{k}$ contains every element of the underlying value semigroup ${\bf S}$.
On the other hand, whenever $\mathcal{V}_{\bf{k}}$ is nonempty, establishing that its codimension inside $M^n_d$ is {\it at least} the value predicted by the right-hand side of \eqref{general_cod_estimate} is relatively straightforward.}
\end{rem}

\begin{thm}\label{weak_bound}
With notation and hypotheses as in Conjecture~\ref{gamma_hyp_conj}, we have
\begin{equation}\label{general_weak_cod_estimate}
{\rm cod}(\mc{V}_{\bf k},M^n_d) \geq \sum_{i=1}^n(k_i-i)+\sum_{s\in {\rm S} \cap [2g]} \varphi(s)\rho(s) -\sum_{i=1}^{m} \rho(s^{\ast}_i)- D({\bf k})-1
\end{equation}
whenever $\mc{V}_{\bf k}$ is nonempty.
\end{thm}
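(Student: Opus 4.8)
The plan is to prove the bound by exhibiting an explicit family of algebraically independent conditions beyond ramification, following verbatim the template of the proofs of Theorems~\ref{unibranch_theorem} and~\ref{thmgam}, and then discounting the two systematic redundancies recorded by $\sum_{i=1}^m\rho(s_i^*)$ and $D({\bf k})$. As before, once ${\bf k}$ is fixed one has ${\rm cod}(\mc{V}_{\bf k},M^n_d)=r_P+b_P-1$, with $r_P=\sum_{i=1}^n(k_i-i)$ the ramification, the $-1$ accounting for the position of the preimage of $P$ on $\mb{P}^1$, and $b_P$ the number of independent conditions beyond ramification; so it is enough to show that $b_P\geq\sum_{s\in{\rm S}\cap[2g]}\varphi(s)\rho(s)-\sum_{i=1}^m\rho(s_i^*)-D({\bf k})$. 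I would work locally with the universal parameterization $f_i=t^{k_i}+\sum_{\ell>k_i}a_{i,\ell}t^\ell$ and use the basic fact that $v_t(F)\in{\rm S}$ for every $F\in\op$, so that gaps of ${\rm S}$ are the only source of conditions beyond ramification.

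The first step is to construct the $G$-polynomials. For each $s\in{\rm S}$ below the conductor (which never exceeds $2g$) having $\psi(s)\geq 2$ irreducible decompositions with respect to $k_1,\dots,k_n$, and each $j=2,\dots,\psi(s)$, set $F_{s,j}:=\prod_i f_i^{m_{j,i}}-\prod_i f_i^{m_{1,i}}$, the difference of the monomials attached to the $j$th and the first decomposition; then $v_t(F_{s,j})>s$. I then ``walk up the Dyck column'' indexed by $s$, exactly as in Subcases~1.1--1.3 of the proof of Theorem~\ref{thmgam}: at each step the current valuation $v$ lies a priori between $s+1$ and the conductor; if $v\notin{\rm S}$ the leading coefficient must vanish, producing one condition whose linear part contains a coefficient $a_{i_0,q}$ not seen before ($q$ being the gap in question), while if $v\in{\rm S}$ one subtracts a scalar multiple of a monomial of valuation $v$ in order to advance. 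Since a fresh coefficient enters linearly at every stage, these conditions are algebraically independent; in the idealized situation in which every valuation between $s$ and the conductor is realized by a monomial, each $F_{s,j}$ contributes $\rho(s)$ of them, for a provisional total of $\sum_s\varphi(s)\rho(s)$.

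The second step is to install the two corrections. (i) If $s^*$ is a minimal generator of ${\rm S}$ below the conductor that is not an entry of ${\bf k}$, then no monomial in $f_1,\dots,f_n$ has valuation $s^*$, so the idealized ascent of a $G$-polynomial stalls at valuation $s^*$; to continue one replaces a stalled $G$-polynomial by its difference with another $G$-polynomial of the same valuation, which merges two columns and removes $\rho(s^*)$ conditions from the provisional tally, exactly as in the paragraph ``Conditions beyond $2g-4\ga+1$'' of the proof of Theorem~\ref{thmgam}. Processing the missing generators $s_1^*<\dots<s_\ell^*$ in increasing order, and using that only $\sum_s\varphi(s)$ columns are available, this manoeuvre is applied at most $m=\min(\ell,\sum_s\varphi(s))$ times in total, whence the correction $-\sum_{i=1}^m\rho(s_i^*)$. (ii) The exponent-difference vectors $v_{s,j}-v_{s,1}$ generate the vector matroid $V$; each circuit $C_i$ is a multiplicative relation among the associated monomials, hence a syzygy among the $G$-polynomials it indexes, so the conditions produced by the top polynomial of $C_i$ beyond valuation $s(i)$ are already implied by the others, costing a further $\rho(s(i))$; summing over the circuits yields $-D({\bf k})$. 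Combining (i) and (ii) gives $b_P\geq\sum_s\varphi(s)\rho(s)-\sum_{i=1}^m\rho(s_i^*)-D({\bf k})$, which is the assertion.

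The main obstacle will be the independence bookkeeping once both corrections are simultaneously in force: one must check that no column is charged twice — i.e., that a column sacrificed to a missing generator in (i) is not also the distinguished top column of a circuit in (ii) — and that after deleting the redundant conditions the Jacobian of the survivors with respect to the fresh coefficients $a_{i,q}$ remains of full rank. A subsidiary technical point, already visible in the proof of Theorem~\ref{thmgam}, is to verify that the matroid structure of the $v_{s,j}-v_{s,1}$ is compatible with the order in which valuations are processed, so that $\rho(s(i))$ really is the correct amount to discount for each circuit, and similarly that the count $m$ in (i) is sharp; the analogue of Table~\ref{tab2} serves as a useful sanity check in small cases.
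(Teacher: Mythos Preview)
Your proposal is correct and follows essentially the same approach as the paper: the paper likewise constructs the binomials $F_{s,j}=\prod_i f_i^{m_{j,i}}-\prod_i f_i^{m_{1,i}}$, walks up each Dyck column tracking the linear parts $\sum_i(m_{j,i}-m_{1,i})a_{i,k_i+q}$ of the resulting conditions, and then installs exactly the two corrections you describe, treating first the case in which every minimal generator lies in ${\bf k}$ (so only the matroid correction $D({\bf k})$ is needed) and afterwards handling the missing generators $s_i^*$ by the same column-merging manoeuvre, concluding by induction on their number. The bookkeeping concern you flag in your final paragraph is real but is not spelled out in further detail in the paper either; the organizational device of separating the two corrections into successive stages (all-generators-present first, then induct on missing generators) is the paper's way of sidestepping the simultaneous-interaction issue you raise.
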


\begin{proof}
The argument closely follows that used in proving Theorem~\ref{thmgam}. We start by treating the special case in which every minimal generator belongs to ${\bf k}$. Accordingly, let $f$ denote the ``universal" parameterization of $\mc{V}_{\bf k}$, represented by parameterizing functions $f_i=t^{k_i}+ a_{i,k_{i+1}}t^{k_i+1}+ O(t^{k_i+2})$, $i=1,\dots,n$. Given $s \in {\rm S}$, first assume that $s+1 \in \mb{N} \setminus {\rm S}$. Given any two partitions $(k_n^{m_n},\dots,k_1^{m_1})$ and $(k_n^{m_n^{\pr}},\dots,k_1^{m_1^{\pr}})$ underlying distinct irreducible decompositions of $s$, the binomial $F:=\prod_{i=1}^n f_i^{m_i}- \prod_{i=1} f_i^{m_i^{\pr}}$ imposes a nontrivial condition on the coefficients of $f$, namely that
\begin{equation}\label{pairwise_relation}
\text{lc}(F)= \sum_{i}(m_i-m_i^{\pr}) a_{i,k_{i+1}}=0
\end{equation}
where $\text{lc}(F)$ denotes the leading coefficient of $F$. Similarly, if $s+1 \in {\rm S}$, choose an irreducible decomposition $(k_n^{m_n^{\pr \pr}},\dots,k_1^{m_1^{\pr \pr}})$ of $s+1$, and set
\[
F^{(1)}:=F- \text{lc}(F) \prod_{i=1}^n f_i^{m_i^{\pr \pr}}= F-\sum_{i}(m_i-m_i^{\pr}) a_{i,k_{i+1}} \prod_{i=1}^n f_i^{m_i^{\pr \pr}}.
\]
The crucial point is that
\[
\text{lc}(F^{(1)})= \sum_{i}(m_i-m_i^{\pr}) a_{i,k_{i+2}}+ Q_1
\]
where the quadratic term $Q_1$ is nonlinear in the parameterizing coefficients $a_{i,j}$, and will be irrelevant for our purposes. Indeed, if $s+2 \in \mb{N} \setminus {\rm S}$, we obtain a condition, namely $\text{lc}(F^{(1)})=0$, whose {\it linear part} closely resembles \eqref{pairwise_relation} and involves as-yet unseen variables, namely $a_{i,k_{i+2}}$; if not, we choose an irreducible decomposition $(k_n^{m_n^{\pr \pr \pr}},\dots,k_1^{m_1^{\pr \pr \pr}})$ of $s+2$, and set $F^{(2)}:=F^{(1)}- \text{lc}(F^{(1)}) \prod_{i=1}^n f_i^{m_i^{\pr \pr \pr}}$.
Continuing in this way, we inductively walk up the column indexed by $s$ in the Dyck diagram associated with the pair $({\rm S},f)$, in the process recording the linear parts of the conditions associated with each of the elements of $\mb{N} \setminus {\rm S}$ strictly larger than $s$. 

\medskip

Note that these linear parts depend only on the pair of irreducible decompositions of $s$ we singled out at the outset. Moreover, if they are linearly independent, then their associated nonlinear conditions are algebraically independent. To push this logic further, let $\{(k_n^{m_{i,n}},\dots,k_1^{m_{i,1}}): i=1,\dots,\psi(s)\}$ denote the set of partitions underlying irreducible decompositions of $s$, and let
$v_{s,j}:=(m_{j,1},\dots,m_{j,n})$ denote the $j$th exponent vector, $j=1,\dots,\psi(s)$. We now fix a choice of reference exponent vector $v_{s,1}$. Each difference $v_{s,j}-v_{s,1}$, $j \neq 1$ indexes an initial pair of irreducible decompositions of $s$, and is associated with $\rho(s)$ conditions encountered while inductively walking up the column of the Dyck diagram indexed by $s$.  Let $V(s)$ denote the span of the vectors $v_{s,j}-v_{s,1}$, $2 \leq j \leq \psi(s)$.

\medskip
Now let $s_1 < \dots < s_p$ denote the nonzero elements of ${\rm S}$ strictly less than the conductor, and let $V_{\rm S}:= \sum_{i=1}^p V(s_i)$. The set of circuits of the vector matroid $V$ referenced in the statement of Conjecture~\ref{gamma_hyp_conj} indexes a minimal set of linear dependencies among elements of $V_{\rm S}$. As a result, the output of our inductive procedure includes $\rho(s(i))$ redundant linear expressions in the parameterizing coefficients associated with the maximal semigroup element $s(i)$ implicated in a given circuit $C_i$; and the total number of linearly independent expressions is precisely $\sum_{s\in {\rm S}} \varphi(s)\rho(s)- D({\bf k})$.

\medskip
To modify the above argument in the presence of minimal generators $s_i^{\ast} \notin {\bf k}$, we proceed as follows. Fix a nonzero element $s \in {\rm S}$ for which $\varphi(s)>0$. Fix the reference irreducible decomposition indexed by $v_{s,1}$ as above; let $s^{(1)}$ denote the largest element in ${\rm S}$ that belongs to $\text{Span}({\bf k})$ yet is less than $s_1^{\ast}$; and let $F^{s^{(1)}}_{v_{s,j}}$, $j=2, \dots, \psi(s)$ denote the polynomial with $t$-valuation $s^{(1)}$ inductively constructed in the inductive ``column-walking" process associated with the decomposition of $s$ labeled by $v_{s,j}$. If $\varphi(s)=1$, there is a single inductive process, labeled by $v_{s,2}$, and it terminates. Otherwise, for $j=3, \dots, \psi(s)$ we set $G_{v_{s,j}}:= F^{s^{(1)}}_{v_{s,j}}- F^{s^{(1)}}_{v_{s,2}}$. We are now left with $\varphi(s)-1$ inductive column-walking processes operative in column $s$, whose associated linear conditions are the linear parts of the coefficients of $G_{v_{s,j}}$, $j=3, \dots, \psi(s)$ of terms with $t$-valuation greater than $s^{(1)}$. For each of these processes, we continue ascending the column indexed by $s$, perturbing by monomials in $f_1,\dots,f_n$ at each step until either the column is exhausted, or else we produce polynomials $G^{s^{(2)}}_{v_{s,j}}$ with $t$-valuation equal to the largest element $s^{(2)} \in {\rm S}$ that belongs to $\text{Span}({\bf k} \sqcup s_1^{\ast})$ yet is less than $s_2^{\ast}$. If $\varphi(s)=2$, our unique inductive process terminates. Otherwise, for $j=4, \dots, \psi(s)$ we set $H_{v_{s,j}}:= G_{v_{s,j}}- F^{s^{(1)}}_{v_{s,3}}$ and continue proceeding upwards. We conclude by induction on the number of minimal generators of ${\rm S}$ not in ${\bf k}$.
\end{proof}

\begin{ex}
\emph{An instructive case is that of ${\rm S}=\langle 8,10,12; 25,29\rangle$ and ${\bf k}=(8,10,12)$. There are precisely two nonzero elements of ${\rm S} \cap [2g]$ for which $\varphi>0$ and $\rho>0$, namely $s=20$ and $s=24$. Theorem~\ref{weak_bound} predicts that the column indexed $s=20$ (resp., $n=24$) contributes $4-2=2$ (resp., $2-1=1$) conditions beyond ramification, where the corrections arise from the minimal generators 25 and 29 that do not belong to ${\bf k}$. Macaulay2 \cite{ArqCont} confirms that Conjecture~\ref{gamma_hyp_conj} holds in this case, i.e., that the algebraic conditions produced by our iterative procedure and enumerated by Theorem~\ref{weak_bound} are exhaustive.}
\end{ex}

\begin{rem}
\emph{In every case that we have computed, the exponent vectors $v_{s,j}$, $j=1,\dots,s$ are linearly independent, and thus $\dim V(s)=\varphi(s)$. It seems likely this is a general feature of sets of irreducible partitions with fixed size. In our context, it implies that syzygetic dependencies only occur among conditions associated with distinct columns in the Dyck diagram.}
\end{rem}

\subsection{Rational curves with $\ga$-hyperelliptic singularities of minimal weight}\label{minimal_weight_semigroups}
One immediate upshot of Theorem~\ref{thmgam} is that the codimension of a Severi variety $M^n_{d,g;{\rm S}}$ is never {\it unexpectedly small}, i.e., strictly less than $(n-2)g$, whenever ${\rm S}$ is equal to the $\ga$-hyperelliptic semigroup ${\rm S}_{g,\ga}$ of maximal weight. On the other hand, in \cite[Thm 2.3]{CFM1} we produced a particular infinite class of mapping spaces $M^n_{d,g;{\rm S}}$ of unexpectedly small codimension; the associated semigroups ${\rm S}$ are $\ga$-hyperelliptic semigroups of {\it minimal} weight, and the projective targets of the underlying parameterizations are of dimension $n \geq 8$.

\medskip
More precisely, say that ${\rm S}$ is a $\ga$-hyperelliptic semigroup of minimal weight, for some $\ga \geq 0$. This means precisely that the associated Dyck path $\mc{P}$ is a staircase with steps of unit height and width, or equivalently, that ${\rm S}={\rm S}^{\ast}_{g,\ga}$, where
\[
{\rm S}^{\ast}_{g,\ga}:=\langle 2\ga+2, 2\ga+4, \dots, 2g-2\ga,2g-2\ga+1,2g-2\ga+2,\dots \rangle.
\]
Our result \cite[Thm 2.3]{CFM1} establishes that when $g=3\ga+4$ and $n=\ga+1$, 
the minimal-ramification stratum $\mc{V}_{(2\ga+2,2\ga+4,\dots,2\ga+2n)} \sub M^n_{d,g;{\rm S}^{\ast}_{g,\ga}}$ is unexpectedly large whenever $n \geq 8$. It is natural to ask whether fixing ${\rm S}={\rm S}^{\ast}_{g,\ga}$ while varying the genus $g$ and target dimension $n$ leads to other excess examples $\mc{V}_{(2\ga+2,2\ga+4,\dots,2\ga+2n)}$ when $n$ belongs to the critical interval $[3,7]$. 

\medskip
On the other hand, if $n \leq \ga$, then $\mc{V}_{(2\ga+2,2\ga+4,\dots,2\ga+2n)}$ is in fact empty; indeed, no value $s\in[2\ga+2n+2,4\ga+2]$ can be realized by a polynomial in the $f_i$, $i=1,\dots,n$ if these have valuation vector $(2\ga+2,2\ga+4,\dots,2\ga+2n)$. Our final result handles the remaining cases, in which $n \in [3,7]$ and $\ga \leq n-1$.

\begin{prop}\label{sporadic_minimal_weight_cases}
Assume that $n \leq 2g$, $d \geq \max(2g-2,n)$, and $3 \leq n \leq 7$.
Then either $\mc{V}_{(2\ga+2,2\ga+4,\dots,2\ga+2n)}$ is empty, or else
\[
\text{\rm cod}(\mc{V}_{(2\ga+2,2\ga+4,\dots,2\ga+2n)}, M^n_d) \geq (n-2)g
\]
with the following twenty-one exceptions:
\begin{itemize}
    \item $n \in \{6,7\}$, $\ga=5$, and $g \in [21,24]$; or
    \item $n=7$, $\ga=6$, and $g \in [23,35]$.
\end{itemize}
Of these exceptions, thirteen certifiably underlie Severi varieties with excess components\footnote{As we explain below, we can check explicitly with Macaulay2 that the conditions furnished by Theorem~\ref{weak_bound} are in fact exhaustive in all of the exceptional cases for which either $\ga=5$ or $\ga=6$ and $g \leq 27$.}.
\end{prop}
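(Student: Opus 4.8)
The plan is to combine the lower bound of Theorem~\ref{weak_bound} applied to $\bf k=(2\ga+2,2\ga+4,\dots,2\ga+2n)$ with a careful accounting of which semigroup elements carry multiple irreducible decompositions, and to show that the predicted codimension drops below $(n-2)g$ only in the listed cases. First I would fix ${\rm S}={\rm S}^\ast_{g,\ga}$ and the minimal ramification vector $\bf k$ and compute the raw ramification contribution $\sum_{i=1}^n(k_i-i)=\sum_{i=1}^n(2\ga+2i-i)=n\ga+\binom{n+1}{2}-1$ (adjusting the indexing so $k_0=0$); since we are in the minimal-weight regime, the elements of $\mb N\setminus{\rm S}$ are exactly $1,2,\dots,2\ga+1$ together with $3,5,\dots$ up to $2g-2\ga-1$ — more precisely the $g$ gaps are $\{1,3,5,\dots,2\ga-1\}\cup\{2\ga+1,2\ga+3,\dots\}$ arranged so that $\rho(s)$ for $s$ a horizontal-step label is simply the height of that Dyck column, which here equals $g-(s/2)+\ga$ for even $s\le 2g-2\ga$ and then decreases by the staircase pattern. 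The core computation is to evaluate $\sum_{s\in{\rm S}\cap[2g]}\varphi(s)\rho(s)$: because the staircase semigroup ${\rm S}^\ast_{g,\ga}$ is generated by the consecutive even numbers $2\ga+2,\dots,2g-2\ga$ and then all integers past the conductor, the elements $s$ with $\varphi(s)=\psi(s)-1>0$ are precisely those admitting two or more ways to be written using parts from $\bf k$, and one must tabulate, as a function of $(n,\ga,g)$, how many such $s$ occur below $2g$ and what their $\rho$-values are. I would next subtract the correction terms $\sum_{i=1}^m\rho(s_i^\ast)$ coming from the minimal generators of ${\rm S}^\ast_{g,\ga}$ in the range $(2\ga+2n, 4\ga+2]\cup$ (the generators just past $2g-2\ga$) that do \emph{not} appear among the $k_i$, and the syzygetic defect $D(\bf k)$; both of these are controlled by the same staircase combinatorics.

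The strategy for the emptiness dichotomy is: if $n\le\ga$ the argument already given in the text shows $\mc V_{\bf k}=\emptyset$, so assume $\ga\le n-1$; then one checks that every $s\in[2\ga+2n+2,4\ga+2]$ and, inductively, every element of ${\rm S}^\ast_{g,\ga}$ below the conductor, is realized by a monomial in $f_1,\dots,f_n$ once $n\ge\ga+1$, hence $\mc V_{\bf k}\ne\emptyset$ (modulo the genericity subtlety flagged in Remark~\ref{independent_partitions}, which for staircase semigroups can be resolved directly since the relevant decompositions are explicit). Granting nonemptiness, Theorem~\ref{weak_bound} gives ${\rm cod}(\mc V_{\bf k},M^n_d)\ge\Phi(n,\ga,g)$ for an explicit piecewise-polynomial function $\Phi$, and the proposition reduces to the purely arithmetic inequality $\Phi(n,\ga,g)\ge(n-2)g$, to be verified for all $(n,\ga,g)$ with $3\le n\le 7$, $0\le\ga\le n-1$, and $g$ in the admissible range $g\ge\max(?,\dots)$ forced by ${\rm S}^\ast_{g,\ga}$ being $\ga$-hyperelliptic and by $n\le 2g$, $d\ge 2g-2$. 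Because $n$ and $\ga$ range over a small finite set, $\Phi(n,\ga,g)-(n-2)g$ is for each fixed $(n,\ga)$ an explicit function of $g$ that is eventually nonnegative; I would isolate the finitely many small-$g$ windows where it is negative, and these windows are exactly the twenty-one triples listed. Finally, for the thirteen certified cases I would invoke the Macaulay2 computation recorded in \cite{ArqCont}: there one checks that the ideal generated by the $G$- and $H$-polynomials produced by the inductive procedure of Theorem~\ref{weak_bound} has the codimension predicted by the right-hand side of \eqref{general_weak_cod_estimate}, so that the inequality is an equality and the stratum is genuinely unexpectedly large (equivalently, the full Severi variety $M^n_{d,g;{\rm S}^\ast_{g,\ga}}$ acquires an excess component).

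The main obstacle I anticipate is the bookkeeping of $\sum_{s}\varphi(s)\rho(s)-\sum_i\rho(s_i^\ast)-D(\bf k)$ for the staircase semigroup: one must count, uniformly in $(n,\ga,g)$, the number of irreducible decompositions of each relevant $s$ with respect to the consecutive-even parts $2\ga+2,\dots,2\ga+2n$, which is a restricted-partition count that behaves differently in several $g$-ranges (below $4\ga+2$, between $4\ga+2$ and $2g-2\ga$, and past the conductor), and then correctly identify the circuits of the vector matroid $V(E)$ to avoid double-counting. A secondary difficulty is making the emptiness criterion fully rigorous in the borderline range $n=\ga+1$, where some elements of ${\rm S}^\ast_{g,\ga}$ are realized only barely; here I would argue directly that the reference decompositions can be chosen so that the leading coefficients remain algebraically independent, which is transparent for staircase semigroups because the monomials involved are $f_1^{j}$ and $f_1^{j-1}f_i$ type expressions whose leading terms are distinct linear forms in fresh coefficients, exactly as in Lemma~\ref{lemleq}.
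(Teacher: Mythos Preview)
Your overall strategy---apply Theorem~\ref{weak_bound} to ${\bf k}=(2\ga+2,\dots,2\ga+2n)$ and verify the resulting inequality case-by-case over the finite set of pairs $(n,\ga)$ with $3\le n\le 7$ and $1\le\ga\le n-1$---is exactly what the paper does. But several ingredients of your setup are wrong. The ramification count is $\sum_{i=1}^n(k_i-i)=\sum_{i=1}^n(2\ga+i)=2n\ga+\binom{n+1}{2}$, not $n\ga+\binom{n+1}{2}-1$. The gap set of ${\rm S}^\ast_{g,\ga}$ includes the even numbers $2,4,\dots,2\ga$ as well as the odd ones you list, and the correct formula is $\rho(2\ga+2k)=g-2\ga-k$, not $g-(s/2)+\ga=g-k$. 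Most importantly, the correction $\sum_i\rho(s_i^\ast)$ vanishes identically here: once $n\ge\ga+1$, the minimal generators of ${\rm S}^\ast_{g,\ga}$ below the conductor are precisely $2\ga+2,\dots,4\ga+2$, all of which lie in ${\bf k}$. The paper uses this to reduce immediately to the clean inequality \eqref{reduction_ineq}.

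The more substantive gap is in execution. You propose to compute the right-hand side of \eqref{general_weak_cod_estimate} exactly as a piecewise-polynomial $\Phi(n,\ga,g)$ and then check $\Phi\ge(n-2)g$; this would require full control of all the restricted-partition counts $\wt\psi_{n,\ga}$ and the full circuit structure of the matroid $V(E)$, which you flag as the main obstacle but do not address. The paper sidesteps this entirely: for each $(n,\ga)$ it writes down by hand a short list of elements $s$ (typically two to five, e.g.\ $s=2\ga+4$ and $2\ga+5$ when $n=4$) with explicit irreducible decompositions whose difference vectors are visibly independent, so $D({\bf k})=0$ on that sublist, and this already suffices to push the bound past $(n-2)g$ for all but a small $g$-window. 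Only for $(n,\ga)\in\{(6,5),(7,5),(7,6)\}$ is a longer list with genuine syzygies needed, and there the dependencies are enumerated explicitly. Your plan is not wrong, but it is harder than necessary and, as written, the arithmetic errors in $r_P$ and $\rho$ would propagate through any attempt to carry it out.
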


\begin{proof}
In light of Theorem~\ref{unibranch_theorem} and the discussion above, we may assume $\ga \in [1,n-1]$. We have
\begin{equation}\label{ramif_estimate}
r_P = \sum_{j=2\ga+1}^{2\ga+n}j =2n\ga +\binom{n+1}{2}
\end{equation}
while 
\begin{equation}\label{rho_estimate}
\rho(2\ga+2k)=g-2\ga-k
\end{equation}
for all $k=1, \dots, g-2\ga$. Further note that the set of minimal generators less than the conductor $2g-2\ga$ and not belonging to the ramification profile is empty; $\rho(s)=0$ whenever $s \geq 2g-2\ga$; and for every $k=1,\dots,g-2\ga-1$  we have 
\[
\psi(2\ga+2k)=\psi_{n,\ga}(\ga+k;\ga+1,\ga+2,\dots,\ga+n)
\]
where $\psi_{n,\ga}(t;\ga+1,\ga+2,\dots,\ga+n)$ denotes the number of irreducible decompositions of $t$ with respect to $\ga+1,\ga+2,\dots,\ga+n$. 
Now let $\wt{\psi}_{n,\ga}(t):=\max(\psi(t;\ga+1,\ga+2,\dots,\ga+n)-1,0)$.
Applying Theorem~\ref{weak_bound} in tandem with \eqref{ramif_estimate} and \eqref{rho_estimate}, we are reduced to showing that
\begin{equation}\label{reduction_ineq}
(n-2)g \leq 2n\ga+ \binom{n+1}{2}-1+ \sum_{k=1}^{g-2\ga-1} (g-2\ga-k) \wt{\psi}_{n,\ga}(\ga+k)- D({\bf k}).
\end{equation}

Our basic strategy, outside of the twenty-one exceptional cases, will be to find $j$ values of these $t$ for which the associated values of $\wt{\psi}_{n,\ga}(t)$ sum to at least $(n-2+j)$ (and higher in cases with nonzero syzygetic defect).

{\fl \it Case: $n=3$.} In light of \eqref{ramif_estimate}, the estimate \eqref{reduction_ineq} follows trivially whenever $g<6\ga+6$; so without loss of generality, we assume $g \geq 6\ga+6$.
Note that $\wt{\psi}_{3,\ga}(2\ga+4) \geq 1$; 
indeed, $2\ga+4$ has irreducible decompositions with underlying partitions $(\ga+3,\ga+1)$ and $(\ga+2)^2$. The required estimate \eqref{reduction_ineq} follows immediately.

{\fl \it Case: $n=4$.} This time, we may assume $g \geq 4\ga+5$ without loss of generality. The estimate \eqref{reduction_ineq} follows from the facts that
$\wt{\psi}_{4,\ga}(2\ga+4) \geq 1$, $\wt{\psi}_{5,\ga}(2\ga+5) \geq 1$, and that there are no syzygetic dependencies among the irreducible decompositions of $2\ga+4$ and $2\ga+5$ with underlying partitions $(\ga+2)^2$, $(\ga+3,\ga+1)$ and $(\ga+3,\ga+2)$, $(\ga+4,\ga+1)$, respectively.

{\fl \it Case: $n=5$.} We may assume $g \geq \frac{10}{3}\ga+\frac{14}{3}$ without loss of generality. As a result, the right-hand side of \eqref{reduction_ineq} is at least
\[
\ka_5(g,\ga):=10\ga+14+ \sum_{k=1}^{\lceil \frac{4}{3}\ga+\frac{11}{3} \rceil} (g-2\ga-k) \wt{\psi}_{5,\ga}(\ga+k)- D({\bf k}).
\]
Obtaining the required $3g$ conditions now depends on the value of $\ga$ itself. 
\begin{itemize}
\item When $\ga=1$, we compute $\ka_5(g,1) \geq 3g+6$ using the facts that $\wt{\psi}_{5,1}(k+1) \geq 1$ when $k=3,4,5$, and that there are no syzygetic dependencies among the irreducible decompositions of 4,5, and 6 with underlying partitions $(2^2)$, $(4)$; $(3,2)$, $(5)$; and $(2^3)$, $(6)$, respectively.

\item When $\ga=2$, we compute $\ka_5(g,2)= 3g+7$ using the facts that $\wt{\psi}_{5,2}(k+2)=1$ when $k=4,5,6$, and that there are no syzygetic dependencies among the irreducible decompositions of 6,7, and 8 with underlying partitions $(3^2), (6)$; $(4,3), (7)$; and $(4^2),(5,3)$, respectively.

\item When $\ga=3$, we compute $\ka_5(g,3)= 3g+6$ using the facts that $\wt{\psi}_{5,3}(k+3)=1$ when $k=5,7,8$, and that there are no syzygetic dependencies among the irreducible decompositions of 8,10, and 11 with underlying partitions $(4^2), (8)$; $(5^2), (6,4)$; and $(6,5),(7,4)$, respectively.

\item When $\ga=4$, we compute $\ka_5(g,4)=2g+21$ using the facts that $\wt{\psi}_{5,4}(k+4)=1$ when $k=8,9$; in particular, we obtain the required estimate whenever $g \leq 21$. Now say $g \geq 22$. Note that $\wt{\psi}_{5,4}(14) \geq 1$, and that there are no syzygetic dependencies among the irreducible decompositions of 12, 13, and 14 with underlying partitions $(6^2), (7,5)$; $(8,5), (7,6)$; and $(9,5), (8,6)$, respectively. It follows 
that the right-hand side of \eqref{reduction_ineq} is at least
$\ka(g,4)+ (g-2\ga-10)=3g+3$.
\end{itemize}

{\fl \it Case: $n=6$.} We may assume $g \geq 3\ga+5$; the right-hand side of \eqref{reduction_ineq} is then at least
\[
\ka_6(g,\ga):=12\ga+20+ \sum_{k=1}^{\ga+4} (g-2\ga-k) \wt{\psi}_{6,\ga}(\ga+k) -D({\bf k}).
\]
\begin{itemize}
    \item When $\ga=1$, we compute $\ka_6(g,1)=4g+7$ using the facts that $\wt{\psi}_{6,1}(k+1)=1$ for $k=3,4$, $\wt{\psi}_{6,1}(6)=2$, and that the corresponding collection of irreducible decompositions (whose underlying partitions are $(2^2), (4)$; $(3,2), (5)$; and $(4,2), (3^2), (6)$) has zero syzygetic defect.
    \item When $\ga=2$, we compute $\ka_6(g,2)=4g+7$ using the facts that $\wt{\psi}_{6,2}(k+2)=1$ for $k=4,5$, $\wt{\psi}_{6,2}(8)=2$, and that the corresponding collection of irreducible decompositions (whose underlying partitions are $(3^2), (6)$; $(4,3), (7)$; and $(4^2), (5,3), (8)$) has zero syzygetic defect.
    \item When $\ga=3$, we compute $\ka_6(g,3)=3g+20$ using the facts that $\wt{\psi}_{6,3}(k+3)=1$ for $k=5,6,7$, and that the corresponding collection of irreducible decompositions (with underlying partitions $(4^2), (8)$; $(5,4), (9)$; and $(5^2), (6,4)$) has zero syzygetic defect. Thus we have produced at least $4g$ conditions whenever $g \leq 20$. Assume $g \geq 21$; we deduce that the right-hand side of \eqref{reduction_ineq} is at least $4g+6$ from the facts that $\wt{\psi}_{6,3}(11)=1$, and that the corresponding irreducible decompositions with underlying partitions $(7,4), (6,5)$ are independent of the others already listed.
    \item When $\ga=4$, we use the fact that $\wt{\psi}_{6,4}(k+4)=1$ for $k=6,8$ and that the corresponding irreducible decompositions are independent, i.e. have zero syzygetic defect, to compute $\ka_6(g,4)=2g+38$, which is strictly less than $4g$ if and only if $g \geq 20$. Assume $g \geq 20$; using the facts that $\wt{\psi}_{6,4}(13)=1$ and $\wt{\psi}_{6,4}(14)=2$ and that the collection of irreducible decompositions with underlying partitions $(5^2),(10)$; $(6^2),(7,5)$; $(7,6), (8,5)$; and $(7^2),(9,5),(8,6)$ has zero syzygetic defect, we deduce that the right-hand side of \eqref{reduction_ineq} is at least $\ka_6(g,4)+2g-35=4g+3$.
    \item When $\ga=5$, we use the fact that $\wt{\psi}_{6,5}(9)=1$ to compute $\ka_6(g,5)=g+61$, which is strictly less than $4g$ if and only if $g \geq 21$. Now assume $g \geq 25$; we will turn to the (exceptional) cases $g=21,22,23,24$ in a moment. To bound the right-hand side of \eqref{reduction_ineq}, we use the facts that $\wt{\psi}_{6,5}(15)=1$ counts (one minus) the irreducible decompositions with underlying partitions $(8,7)$, $(9,6)$; $\wt{\psi}_{6,5}(16)=2$ counts the irreducible decompositions corresponding to $(10,6), (9,7), (8^2)$; $\wt{\psi}_{6,5}(17)=2$ counts the irreducible decompositions corresponding to $(9,8), (10,7), (11,6)$; $\wt{\psi}_{6,5}(18)=3$ counts the irreducible decompositions corresponding to $(9^2), (10,8), (11,7), (6^3)$; and $\wt{\psi}_{6,5}(19)=2$ counts the irreducible decompositions corresponding to $(10,9),(11,8),(7,6^2)$. The new wrinkle here is that the syzygetic defect is not zero; rather, there are five nontrivial linear dependencies which together account for a correction of $5g- 113$. It follows that the right-hand side of \eqref{reduction_ineq} is at least $6g-49$, which is greater than $4g$ because $g \geq 25$.
    
    \noindent
    Finally, if $g=21$, the only irreducible decompositions that are operative are those corresponding to partitions of 15 as above; accordingly, the syzygetic defect is zero and the right-hand side of \eqref{reduction_ineq} becomes $2g+41=83$. Similarly, if $g=22$, $g=23$, or $g=24$, we also allow for decompositions of 16, 17, and 18, and the number of syzygetic linear dependencies is one, two, or four, respectively, so the right-hand side of \eqref{reduction_ineq} becomes $3g+20=86$, $4g-2=90$, or $5g-25=95$, respectively. Calculations with Macaulay2 \cite{ArqCont} certify that these are the actual codimensions of the corresponding loci $\mc{V}_{\bf k}$.
    
\end{itemize}

{\fl \it Case: $n=7$.} We may assume $g \geq \frac{14}{5}\ga+\frac{27}{5}$; the right-hand side of \eqref{reduction_ineq} is then at least
\[
\ka_7(g,\ga):=14\ga+ 27+ \sum_{k=1}^{\lceil \frac{4}{5}\ga+\frac{22}{5} \rceil} (g-2\ga-k) \wt{\psi}_{7,\ga}(\ga+k)- D({\bf k}).
\]

\begin{itemize}
    \item When $\ga=1$, we compute $\ka_7(g,1)= 4g+16$ using the facts that $\wt{\psi}_{7,1}(k+1)=1$ for $k=3,4,6$ and $\wt{\psi}_{7,1}(6)=2$, and that the corresponding irreducible decompositions with underlying partitions $(2^2),(4)$; $(3,2),(5)$; $(2^2,3), (7)$; and $(3^2),(2^3),(6)$ are independent. In particular, it follows that the right-hand side of \eqref{reduction_ineq} is at least $5g+8$.
    \item When $\ga=2$, we count much as in the $\ga=1$ case. It is easy to see that $\wt{\psi}_{7,2}(k+2)=1$ for $k=4,5$ and $\wt{\psi}_{7,2}(8)=2$, and that the corresponding collections of irreducible decompositions with underlying partitions $(3^2), (6)$; $(4,3), (7)$; and $(4^2), (5,3), (8)$ are independent. It follows that $\ka_7(g,2)=4g+18$, which is at least $5g$ when $g \leq 18$. On the other hand, if $g \geq 19$ we use $\wt{\psi}_{7,2}(7)=2$ and that the corresponding irreducible decompositions indexed by $(3^3), (5,4), (9)$ are independent of the others to conclude that the right-hand side of \eqref{reduction_ineq} is at least $5g+7$.
    \item When $\ga=3$, we compute $\ka_7(g,3)=4g+20$ using the facts that $\wt{\psi}_{7,3}(k+3)=1$ for $k=5,6$, $\wt{\psi}_{7,3}(10)=2$, and that the corresponding collections of irreducible decompositions indexed by $(4^2), (8)$; $(5,4), (9)$; and $(6,4), (5^2), (10)$ are independent. In particular, we obtain at least $5g$ conditions whenever $g \leq 20$. If $g \geq 21$, we use $\wt{\psi}_{7,3}(8)=1$ and the fact that the irreducible decompositions $(7,4), (6,5)$ are independent of the others already listed to conclude that the right-hand side of \eqref{reduction_ineq} is at least $5g+6$.
    \item When $\ga=4$, we compute $\ka_7(g,4)=3g+38$ using the facts that $\wt{\psi}_{7,4}(k+4)=1$ for $k=6,7,8$ and that the corresponding collections of irreducible decompositions indexed by $(5^2),(10)$; $(6,5),(11)$; and $(7,5),(6^2)$ are independent. Consequently, we obtain at least $5g$ conditions whenever $g \leq 19$. If $g \geq 20$, there are additional conditions arising from the irreducible decompositions indexed by $(7,6), (8,5)$; and $(7^2), (8,6), (9,5)$ and counted by $\wt{\psi}_{7,4}(13)=1$ and $\wt{\psi}_{7,4}(14)=2$, respectively. These are not independent of the others; rather, there is a single linear relation, which produces a syzygetic defect of $g-2\ga-10=g-18$. It follows that the right-hand side of \eqref{reduction_ineq} is at least $5g+3$.
    \item When $\ga=5$, we compute $\ka_7(g,5)=2g+61$ using the facts that $\wt{\psi}_{7,5}(k+5)=1$ for $k=7,9$ and that the corresponding collections of irreducible decompositions indexed by $(6^2),(12)$ and $(7^2),(8,6)$ are independent. Consequently, we obtain at least $5g$ conditions whenever $g \leq 20$. Now assume $g \geq 25$. To bound the right-hand side of \eqref{reduction_ineq}, we use the facts that $\wt{\psi}_{7,5}(15)=1$ counts irreducible decompositions indexed by $(9,6),(8,7)$; $\wt{\psi}_{7,5}(16)=2$ counts irreducible decompositions indexed by $(10,6), (9,7), (8^2)$; $\wt{\psi}_{7,5}(17)=2$ counts irreducible decompositions indexed by $(11,6),(10,7),(9,8)$; $\wt{\psi}_{7,5}(18)=3$ counts irreducible decompositions indexed by $(6^3),(11,7),(10,8),(9^2)$; and $\wt{\psi}_{7,5}(19)=2$ counts irreducible decompositions indexed by $(7,6^2),(11,8),(10,9)$. By the same calculation carried out previously when $n=6$ and $\ga=5$, the associated syzygetic defect is $5g- 113$. It follows that the right-hand side of \eqref{reduction_ineq} is at least $7g-49$, which is greater than $5g$ because $g \geq 25$.
    
    \noindent Similarly, if $g=21$, the right-hand side of \eqref{reduction_ineq} becomes $3g+41=104$; while if $g=22$, $g=23$, or $g=24$, it becomes $4g+20=108$, $5g-2=113$, or $6g-25=119$, respectively.
    \item Finally, when $\ga=6$, we use the fact that $\wt{\psi}_{7,6}(16)=1$ counts irreducible decompositions of 16 indexed by $(9,7),(8^2)$ to compute $\ka_7(g,6)=g+89$, which is at least $5g$ whenever $g \leq 22$. Now say $g \geq 29$. To bound the right-hand side of \eqref{reduction_ineq}, we use the facts that $\wt{\psi}_{7,6}(17)=1$ counts irreducible decompositions indexed by $(10,7)$, $(9,8)$; $\wt{\psi}_{7,6}(18)=2$ counts irreducible decompositions indexed by $(11,7)$, $(10,8)$, $(9^2)$; $\wt{\psi}_{7,6}(19)=2$ counts irreducible decompositions indexed by $(12,7)$, $(11,8)$, $(10,9)$; $\wt{\psi}_{7,6}(20)=3$ counts irreducible decompositions indexed by $(13,7)$, $(12,8)$, $(11,9)$, $(10^2)$; $\wt{\psi}_{7,6}(21)=3$ counts irreducible decompositions indexed by $(7^3)$, $(13,8)$, $(12,9)$, $(11,10)$; $\wt{\psi}_{7,6}(22)=3$ counts irreducible decompositions indexed by $(8,7^2)$, $(13,9)$, $(12,10)$, $(11^2)$; and $\wt{\psi}_{7,6}(23)=2$ counts irreducible decompositions indexed by $(7^2,9)$, $(8^2,7)$, $(13,10)$, $(12,11)$.\footnote{In the last case, the fact that $\wt{\psi}_{7,6}(23)$ is {\it two} less than the number of underlying partitions arises because the decomposition $8+8+7=7+7+9$ is {\it reducible}.} It is, moreover, easy to check that for every integer $k \in [11,15]$, there is exactly one irreducible decomposition of $k+6$ counted by $\wt{\psi}_{7,6}(k+6)$ that is independent of the irreducible decompositions counted by $\wt{\psi}_{7,6}(10)$, $\wt{\psi}_{7,6}(11)$, \dots, $\wt{\psi}_{7,6}(k+5)$. It follows that the right-hand side of \eqref{reduction_ineq} is at least 
    \[
    \ka_7(g,6)+ \sum_{k=11}^{15} (g-12-k)= 6g-36
    \]
    which is greater than $5g$ precisely when $g \geq 36$. Furthermore, it is easy to check explicitly that no irreducible decomposition of $k+6$ counted by $\wt{\psi}_{7,6}(k+6)$ is independent of the irreducible decompositions counted by $\wt{\psi}_{7,6}(10)$, $\wt{\psi}_{7,6}(11)$, \dots, $\wt{\psi}_{7,6}(k+5)$ for integers $k \in [16,22]$, so Conjecture~\ref{gamma_hyp_conj} predicts that the codimension of $\mc{V}_{(14,16,\dots,26)}$ is precisely $(6g-36)$ for all $g \in [29,35]$.

    \noindent Similarly, the right-hand side of \eqref{reduction_ineq} is precisely 112 when $g=23$; 114 when $g=24$; 117 when $g=25$; 121 when $g=26$; 126 when $g=27$; and 133 when $g=28$. Macaulay2 \cite{ArqCont} certifies the actual codimensions of the corresponding loci $\mc{V}_{\bf k}$ whenever $g \leq 27$; we anticipate this can be pushed further with a more sophisticated implementation.
\end{itemize}
\end{proof}

\begin{rem}

We suspect that Conjecture~\ref{gamma_hyp_conj} should persist for arbitrary choices of algebraically-closed base fields, in the same way that the irreducibility of the classical Severi variety of plane curves persists \cite{CHT}. If true, this should be easy to verify in each instance for which Conjecture~\ref{gamma_hyp_conj} may be checked by computer.
\end{rem}

To close this subsection, we show how to modify the construction of \cite[Thm 2.3]{CFM1} to obtain new infinite families of Severi varieties 
of unexpectedly small codimension, assuming the validity of Conjecture~\ref{gamma_hyp_conj}.

\begin{prop}\label{new_Severi-excessive_families} Let $\mc{V}_{(2\ga+2,2\ga+4,\dots,2\ga+2n)}$ denote the (generic stratum of a) Severi variety with underlying value semigroup ${\rm S}^{\ast}_{g,\ga}$ as above. Now assume that $n=\ga+1$, $d \geq 2g-2$, and that $g=3\ga+6$ (resp., $g=3\ga+8$) for some nonnegative integer $\ga \geq 5$. Assume that Conjecture~\ref{gamma_hyp_conj} holds; then $\mc{V}_{(2\ga+2,2\ga+4,\dots,2\ga+2n)}$ is of codimension $\frac{5}{2}\ga^2+ \frac{7}{2}\ga+3$ (resp., $\frac{5}{2}\ga^2+ \frac{7}{2}\ga+10$) in $M^n_d$. In particular, $\mc{V}_{(2\ga+2,2\ga+4,\dots,2\ga+2n)}$ is of codimension strictly less than $(n-2)g$ in $M^n_d$.
\end{prop}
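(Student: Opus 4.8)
The plan is to substitute the data ${\bf k}=(2\ga+2,2\ga+4,\dots,2\ga+2n)$, $n=\ga+1$, into the codimension formula \eqref{general_cod_estimate} of Conjecture~\ref{gamma_hyp_conj} and to evaluate each of its four terms. Three of them are straightforward. The ramification contribution is $\sum_{i=1}^n(k_i-i)=\sum_{i=1}^{\ga+1}(2\ga+i)=2\ga(\ga+1)+\binom{\ga+2}{2}=\tfrac12(5\ga^2+7\ga+2)$, which is \eqref{ramif_estimate} with $n=\ga+1$. The minimal generators of ${\rm S}^{\ast}_{g,\ga}$ lying strictly below the conductor $2g-2\ga$ are precisely the even integers $2\ga+2,2\ga+4,\dots,4\ga+2$, i.e.\ the entries of ${\bf k}$; hence the index $m$ appearing in Conjecture~\ref{gamma_hyp_conj} is $0$ and the correction $\sum_{i=1}^m\rho(s^{\ast}_i)$ disappears. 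Finally, $\rho(2\ga+2k)=g-2\ga-k$ by \eqref{rho_estimate}, and $\rho(s)=0$ once $s\geq 2g-2\ga$, so the sum $\sum_{s\in{\rm S}\cap[2g]}\varphi(s)\rho(s)$ is supported on the columns $s=2\ga+2k$ with $1\leq k\leq g-2\ga-1$. (The degree hypotheses $d\geq\max(n,2g-2)$ of Conjecture~\ref{gamma_hyp_conj} hold since $d\geq 2g-2\geq n$ when $\ga\geq 5$, and one uses implicitly that $\mc{V}_{\bf k}\neq\emptyset$; cf.\ Remark~\ref{independent_partitions}.)

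The first substantive step is to pin down $\varphi$ on the relevant columns. Because $f_1,\dots,f_n$ have even $P$-valuations $2(\ga+1),\dots,2(2\ga+1)$, a decomposition of $2\ga+2k$ with respect to ${\bf k}$ is the same as a decomposition of $\ga+k$ with respect to $\{\ga+1,\dots,2\ga+1\}$; and since $\ga\geq 5$, no $\ga+k$ occurring for $k\leq g-2\ga-1$ admits a three-part decomposition. A finite check then gives $\varphi(2\ga+2k)=0$ unless $\ga+k\in\{2\ga+4,2\ga+5,2\ga+6,2\ga+7\}$, where $\varphi$ equals $1,1,2,2$ respectively, with underlying two-part partitions $(\ga+1,\ga+3),(\ga+2)^2$; $(\ga+1,\ga+4),(\ga+2,\ga+3)$; $(\ga+1,\ga+5),(\ga+2,\ga+4),(\ga+3)^2$; and $(\ga+1,\ga+6),(\ga+2,\ga+5),(\ga+3,\ga+4)$ (the last forcing $\ga\geq 5$). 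When $g=3\ga+6$ only the first two of these columns satisfy $k\leq g-2\ga-1$, so $\sum_s\varphi(s)\rho(s)=1\cdot2+1\cdot1=3$; when $g=3\ga+8$ all four are in range and $\sum_s\varphi(s)\rho(s)=1\cdot4+1\cdot3+2\cdot2+2\cdot1=13$.

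The genuinely delicate point — and the step I expect to be the main obstacle — is computing the syzygetic defect $D({\bf k})$. Write $e_j$ for the unit vector attached to the part $\ga+j$; then the exponent-difference vectors of the (at most four) active columns are $v_1=e_1-2e_2+e_3$, $v_2=e_1-e_2-e_3+e_4$, the pair $\{e_1-2e_3+e_5,\ e_2-2e_3+e_4\}$, and the pair $\{e_1-e_3-e_4+e_6,\ e_2-e_3-e_4+e_5\}$. For $g=3\ga+6$ only $v_1,v_2$ occur and are visibly independent, so $D({\bf k})=0$. For $g=3\ga+8$ one checks the two identities $e_2-2e_3+e_4=v_2-v_1$ and $e_2-e_3-e_4+e_5=(e_1-2e_3+e_5)-v_2$, while the remaining vectors $v_1,v_2,e_1-2e_3+e_5,e_1-e_3-e_4+e_6$ are independent (a nonzero $e_5$- resp.\ $e_6$-coordinate excludes further relations); thus one difference vector of the column $2\ga+6$ and one of the column $2\ga+7$ is redundant relative to strictly-earlier columns. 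Charging each of these two independent syzygies to the column in which the redundancy first occurs gives $D({\bf k})=\rho\bigl(2\ga+2(\ga+6)\bigr)+\rho\bigl(2\ga+2(\ga+7)\bigr)=2+1=3$; the point that needs care is that the two syzygies be counted once apiece, so that the further linear dependency obtained by combining them is not recharged.

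Assembling the pieces via \eqref{general_cod_estimate}: for $g=3\ga+6$ one obtains ${\rm cod}(\mc{V}_{\bf k},M^n_d)=\tfrac12(5\ga^2+7\ga+2)+3-0-0-1=\tfrac52\ga^2+\tfrac72\ga+3$, and for $g=3\ga+8$, ${\rm cod}(\mc{V}_{\bf k},M^n_d)=\tfrac12(5\ga^2+7\ga+2)+13-0-3-1=\tfrac52\ga^2+\tfrac72\ga+10$. Since $n-2=\ga-1$, the relevant benchmark is $(n-2)g=(\ga-1)(3\ga+6)=3\ga^2+3\ga-6$ (resp.\ $(\ga-1)(3\ga+8)=3\ga^2+5\ga-8$), whence $(n-2)g-{\rm cod}(\mc{V}_{\bf k},M^n_d)=\tfrac12\ga^2-\tfrac12\ga-9$ (resp.\ $\tfrac12\ga^2+\tfrac32\ga-18$), which is strictly positive for every integer $\ga\geq 5$. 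Therefore $\mc{V}_{\bf k}$ has codimension strictly less than $(n-2)g$ in $M^n_d$, completing the proof.
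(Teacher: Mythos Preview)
Your proof is correct and follows essentially the same approach as the paper's own argument: both substitute the data into the formula of Conjecture~\ref{gamma_hyp_conj}, compute $\varphi$ on the relevant columns via the two-part partitions of $2\ga+4,\dots,2\ga+7$, and evaluate the syzygetic defect as $0$ (resp.\ $3$). Your treatment is in fact slightly more explicit than the paper's---you write down the exponent-difference vectors and exhibit the two linear dependencies directly, and you carry out the final comparison with $(n-2)g$ numerically rather than leaving it implicit---and you correctly flag that the ``circuits'' in the defect formula should be counted as a fundamental system (two syzygies, not all matroid circuits), which is exactly how the paper uses it in its own proof.
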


\begin{proof}
Assuming that Conjecture~\ref{gamma_hyp_conj} holds, the right-hand side of \eqref{reduction_ineq} computes the codimension of $\mc{V}=\mc{V}_{(2\ga+2,2\ga+4,\dots,2\ga+2n)}$ in $M^n_d$.
{\fl \it Case: $g=3\ga+6$.} The right-hand side of \eqref{reduction_ineq} is equal to
\[
\frac{5}{2}\ga^2+ \frac{7}{2}\ga+ \sum_{k=1}^{\ga+5} (\ga+6-k) \wt{\psi}_{\ga+1,\ga}(\ga+k)- D({\bf k}).
\]
The desired conclusion in this case follows immediately from the facts that $\wt{\psi}_{\ga+1,\ga}(\ga+k)=0$ for $k \leq \ga+3$, while $\wt{\psi}_{\ga+1,\ga}(\ga+k)=1$ for $k=\ga+4,\ga+5$. The fact that $D({\bf k})=0$, in particular, is clear.

{\fl \it Case: $g=3\ga+8$.} This time, the right-hand side of \eqref{reduction_ineq} is equal to
\[
\frac{5}{2}\ga^2+ \frac{7}{2}\ga+ \sum_{k=1}^{\ga+7} (\ga+8-k) \wt{\psi}_{\ga+1,\ga}(\ga+k)- D({\bf k}).
\]
We use the facts that $\wt{\psi}_{\ga+1,\ga}(k)=0$ for $k \leq \ga+3$, while $\wt{\psi}_{\ga+1,\ga}(2\ga+4)=1$ counts the irreducible decompositions indexed by $((\ga+2)^2),((\ga+3),(\ga+1))$; $\wt{\psi}_{\ga+1,\ga}(2\ga+5)=1$ counts the irreducible decompositions indexed by $((\ga+3),(\ga+2)),((\ga+4),(\ga+1))$; $\wt{\psi}_{\ga+1,\ga}(2\ga+6)=2$ counts the irreducible decompositions indexed by $(\ga+5,\ga+1),(\ga+4,\ga+2),((\ga+3)^2)$; and $\wt{\psi}_{\ga+1,\ga}(2\ga+7)=2$ counts the irreducible decompositions indexed by $(\ga+6,\ga+1),(\ga+5,\ga+2),((\ga+3)^2)$. The two linear dependencies among these together account for $D({\bf k})=\sum_{k=\ga+6}^{\ga+7}(g-2\ga-k)=3$, and the desired conclusion follows.
\end{proof}
The upshot of Proposition~\ref{new_Severi-excessive_families}, taken together with \cite[Thm 2.3]{CFM1}, is that we expect unexpectedly large Severi varieties to exist in every genus $g \geq 21$ and every projective target dimension $n \geq 6$.

\subsection{Generic cusps with ramification $(2m, 2m+2,\dots,2m+2n-2)$, $n \geq 3$}\label{generic_semigroups}

In this subsection, we study the value semigroup ${\rm S}_{\rm gen}$ of a generic singularity with ramification profile equal to an $n$-tuple of consecutive even numbers. Whenever $n \geq 4$, we are not able to determine ${\rm S}_{\rm gen}$ explicitly; we still manage to show, however, that its structure forces the associated Severi variety $M^n_{d,g;{\rm S}_{\rm gen}}$ to be unexpectedly large in general. When $n=3$, we are able to determine ${\rm S}_{\rm gen}$ explicitly and show that $M^n_{d,g;{\rm S}_{\rm gen}}$ is usually unexpectedly large as a result.

\begin{thm}\label{generic_semigroups_n_geq_4}
Given positive integers $n \geq 4$ and $m \geq n+1$, let ${\rm S}_{\rm gen}={\rm S}_{\rm gen}(m,n)$ denote the value semigroup of a generic cusp in $\mb{C}^n$ with ramification profile $(2m,2m+2,\dots,2m+2n-2)$. We then have
\begin{small}
\[
\begin{split}
&\{1,\dots,2m-1\} \sqcup \{2m+1, 2m+3, \dots, 4m+3\} \sqcup \{2m+2n, 2m+2n+2, \dots, 4m-2\} \\
&\sqcup \{4m+4n-5, 4m+4n-3,\dots, 6m+3\} \sub {\rm G}_{\rm gen}
\end{split}
\]
\end{small}
where ${\rm G}_{\rm gen}= \mb{N} \setminus {\rm S}_{\rm gen}$ denotes the complement of ${\rm S}_{\rm gen}$.
\end{thm}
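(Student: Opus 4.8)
The plan is to pin down ${\rm S}_{\rm gen}$ on the range $[1,6m+3]$ by analyzing the cancellation structure of polynomials in the local parameterizing functions, reusing the column-walking bookkeeping from the proof of Theorem~\ref{weak_bound}. Write the generic cusp locally as $f_i=t^{k_i}+\sum_{\ell>k_i}a_{i,\ell}t^{\ell}$ with $k_i=2m+2(i-1)$ for $1\le i\le n$, $k_0=0$, and generic coefficients $a_{i,\ell}$. Since generic power series are algebraically independent, ${\rm S}_{\rm gen}=\{v_t(g):0\ne g\in\mathbb{C}[f_1,\dots,f_n]\}$ and the monomials $f^{I}:=\prod_i f_i^{I_i}$ form a $\mathbb{C}$-basis. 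For $g=\sum_I c_I f^{I}$ one has $v_t(g)\ge d:=\min\{v(f^{I}):c_I\ne 0\}$, where $v(f^{I})=\sum_i I_ik_i$, with equality unless the coefficients of the monomials of valuation $d$ occurring in $g$ sum to zero; thereafter the reachable valuations are governed, for generic $a_{i,\ell}$, by the column-walking procedure, whose output depends only on the multiplicities $\mu_s:=\#\{I:v(f^{I})=s\}$ and on the linear dependencies among the associated exponent-difference vectors.

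For the first three sets I would enumerate the monomials of valuation $\le 4m+3$. Using the hypothesis on $m$, every degree-$\ge 2$ monomial has valuation $\ge 2k_1=4m$, every degree-$\ge 3$ monomial has valuation $\ge 6m>4m+3$, and $2m,2m+2,\dots,2m+2n-2$ are distinct from $4m,4m+2$; hence the monomials of valuation $\le 4m+3$ are precisely $1$, the $f_i$, $f_1^2$ and $f_1f_2$, each of multiplicity one. Multiplicity one precludes cancellation, so $v_t(g)\le 4m+3$ forces $v_t(g)\in\{0,2m,2m+2,\dots,2m+2n-2,4m,4m+2\}$. Therefore ${\rm S}_{\rm gen}\cap[1,4m+3]=\{2m,2m+2,\dots,2m+2n-2\}\cup\{4m,4m+2\}$, whose complement inside $[1,4m+3]$ is exactly $\{1,\dots,2m-1\}\sqcup\{2m+1,2m+3,\dots,4m+3\}\sqcup\{2m+2n,2m+2n+2,\dots,4m-2\}$, giving the first three inclusions.

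For the fourth set I must show ${\rm S}_{\rm gen}$ omits every odd integer in $[4m+4n-5,6m+3]$. First I would record the monomial structure on $[4m+4,6m+3]$: it consists of the degree-two monomials $f_if_j$, whose valuations $4m+2c$, $2\le c\le 2n-2$, have multiplicity $\#\{(i,j):i\le j,\ i+j=c+2\}$ --- which is $\ge 2$ exactly for $2\le c\le 2n-4$, while $4m+4n-6=k_{n-1}+k_n$ and $4m+4n-4=2k_n$ have multiplicity one --- together with $f_1^3$ (valuation $6m$) and $f_1^2f_2$ (valuation $6m+2$), which the hypothesis on $m$ forces to have multiplicity one as well. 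Since the odd levels in the window carry no monomials, an odd value there can be produced only by a column-walking process emanating from an even level of multiplicity $\ge 2$; because the two multiplicity-one levels $4m+4n-6,4m+4n-4$ each recruit one new monomial while consuming one degree of freedom, such a process reaches an odd value $\ge 4m+4n-5$ only if it arrives at level $4m+4n-8$ with strictly positive residual freedom. (The multiplicity-one monomials $f_1^3,f_1^2f_2$ contribute only their own even valuations, and the multiplicity-two level $6m+4$ produces only the odd value $6m+5>6m+3$.)

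The crux --- and the step I expect to be the main obstacle --- is to rule out any such residual freedom, i.e.\ to show that the column-walking processes started at $4m+4,4m+6,\dots,4m+4n-8$ jointly produce exactly the odd values $4m+5,4m+7,\dots,4m+4n-7$ and nothing more. Tracking $r_D$, the dimension of the space of polynomials supported on monomials of valuation $\le D$ and having valuation $>D$, one has $r_D=r_{D-1}+\mu_D-\varepsilon_D$ (with $r_D\ge 0$), where $\varepsilon_D=1$ iff $D\in{\rm S}_{\rm gen}$; the naive rule ``$\varepsilon_D=1$ whenever $r_{D-1}+\mu_D\ge 1$'' is too generous, and the discrepancy is precisely the syzygetic defect $D(\mathbf{k})$ of Theorem~\ref{weak_bound}. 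The plan is then to carry out the bookkeeping: once a reference decomposition is fixed in each column, the relevant exponent-difference vectors number $\sum_{c=2}^{2n-4}(\mu_{4m+2c}-1)=\binom{n-1}{2}$ but span a subspace of $\mathbb{Z}^{n}$ of much smaller rank, and exhibiting the resulting circuits explicitly --- for $n=4$ the three multiplicity-two levels already drain their own freedom, while for $n\ge 5$ one has identities such as $u_{4m+4}-u_{4m+6}+u_{4m+8}^{(1)}=0$, suitably generalized --- shows $r_{4m+4n-8}=0$. Granting this, ${\rm S}_{\rm gen}$ contains no odd integer of $[4m+4n-5,6m+3]$, which completes the proof.
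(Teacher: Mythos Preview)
Your handling of the first three gap sets is essentially the paper's: enumerate the monomials of valuation at most $4m+3$, observe that each occurs with multiplicity one, and conclude that no cancellation is possible.

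For the fourth set you take a much heavier route than the paper, and you do not complete it. The paper's argument is two sentences. First, since every monomial of degree $\ge 3$ has valuation at least $6m$, it suffices to rule out odd values in the target range as valuations of \emph{quadratic} polynomials in the $f_i$. Second, the last level at which two quadratic monomials collide is $4m+4n-8$ (witnessed by $f_{n-1}^2$ and $f_{n-2}f_n$), so by genericity the largest odd valuation realized by any quadratic is $v_t(f_{n-1}^2-f_{n-2}f_n)=4m+4n-7$. There is no column-walking, no $r_D$ recursion, no matroid of exponent-differences.

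Your attempt via the syzygetic-defect bookkeeping of Theorem~\ref{weak_bound} has a genuine gap: you explicitly grant the crux (``Granting this\dots'') rather than proving it. Moreover, the target assertion $r_{4m+4n-8}=0$ is false under your own definition of $r_D$, since $f_{n-1}^2-f_{n-2}f_n$ is supported on monomials of valuation $\le 4m+4n-8$ yet has valuation $4m+4n-7$. What you actually need is that no such polynomial attains valuation $\ge 4m+4n-5$, and your circuit sketch does not establish this: already for $n=5$ the six exponent-difference vectors you describe have rank only three, and nothing in your outline explains why the resulting dependencies kill precisely the right degrees of freedom rather than too few. A smaller slip: your claim that $f_1^3$ and $f_1^2f_2$ sit at multiplicity-one levels fails whenever $n+1\le m\le 2n-2$, since then $6m\le 4m+4n-4$ and quadratic monomials also occupy level $6m$.
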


\begin{proof}
Fix a choice of generic $n$-tuple of power series $f(t)=(f_1(t), \dots, f_n(t))$ with ramification profile $(2m,2m+2,\dots,2m+2n-2)$ in $t=0$; without loss of generality, we may assume each of the parameterizing functions $f_i(t)$ has a monic lowest-order term. The generic value semigroup ${\rm S}_{\rm gen}$ comprises all valuations realized by polynomials in the $f_i(t)$, $i=1,\dots,n$. It follows immediately that $\{1,\dots,2m-1\} \sub {\rm G}_{\rm gen}$. Similarly, we have
\[
\{2m+1, 2m+3, \dots, 4m+3\} \sqcup \{2m+2n, 2m+2n+2, \dots, 4m-2\} \sub {\rm G}_{\rm gen}
\]
as the minimal valuation realized by a {\it nonlinear} polynomial in the $f_i(t)$ is $v_t(f_2^2-f_1f_3)= 4m+5$. (Note that the latter equality is a consequence of genericity.) To conclude, it suffices to see that no element belonging to $\{4m+4n-5, 4m+4n-3,\dots, 6m+3\}$ is the valuation of a quadratic polynomial in the $f_i(t)$; indeed, by genericity, the largest {\it odd} valuation realized by such a quadratic polynomial is $v_t(f_{n-1}^2-f_{n-2}f_n)=4m+4n-7$.
\end{proof}

For our purposes, the crucial take-away of Theorem~\ref{generic_semigroups_n_geq_4} is the following.
\begin{coro}\label{unexpectedly_large_examples_from_generic_semigroups}
For every pair of positive integers $n \geq 4$ and $m > \frac{7}{6}n+\frac{26}{9}$, the corresponding Severi variety $M^n_{d,g;{\rm S}_{\rm gen}}$ is of codimension strictly less than $(n-2)g$ in $M^n_d$ whenever $d \gg g$.
\end{coro}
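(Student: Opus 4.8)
The plan is to bound ${\rm cod}(M^n_{d,g;{\rm S}_{\rm gen}},M^n_d)$ from above purely in terms of $m$ and $n$, to bound $g$ from below using Theorem~\ref{generic_semigroups_n_geq_4}, and to compare the two. Write ${\bf k}=(0,2m,2m+2,\dots,2m+2n-2)$ for the relevant ramification profile. First I would observe that, by the very definition of ${\rm S}_{\rm gen}$, a \emph{generic} $n$-tuple of power series with ramification profile ${\bf k}$ has value semigroup exactly ${\rm S}_{\rm gen}$; hence the stratum $\mc{V}_{\bf k}:=M^n_{d,g;{\rm S}_{\rm gen},{\bf k}}$ is dense inside the locus of degree-$d$ maps $\mb{P}^1\to\mb{P}^n$ carrying a ramification point with vanishing-order vector ${\bf k}$, and therefore imposes \emph{no} conditions beyond ramification ($b_P=0$ in the notation of \eqref{codimension}). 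Since for $d\gg g$ that ramification locus is nonempty of the expected codimension, we obtain
\[
{\rm cod}\bigl(M^n_{d,g;{\rm S}_{\rm gen}},M^n_d\bigr)\ \le\ {\rm cod}(\mc{V}_{\bf k},M^n_d)\ =\ \sum_{i=1}^{n}(k_i-i)-1\ =\ n(2m-2)+\binom{n+1}{2}-1 .
\]

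Next I would estimate $g=\#(\mb{N}\setminus{\rm S}_{\rm gen})$ from below. Theorem~\ref{generic_semigroups_n_geq_4} exhibits four pairwise-disjoint sets of gaps, of cardinalities $2m-1$, $m+2$, $m-n$, and $\max(0,\,m-2n+5)$; the third is nonempty since $m\ge n+1$, which is automatic from the hypothesis $m>\tfrac{7}{6} n+\tfrac{26}{9}$. Hence $g\ge 4m-n+1$ in general, and $g\ge 5m-3n+6$ as soon as $m\ge 2n-4$.

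It then remains to check $n(2m-2)+\binom{n+1}{2}-1<(n-2)g$. For $n\ge5$ the first three gap-intervals already suffice: substituting $g\ge 4m-n+1$, the inequality rearranges to $2m(n-4)>\tfrac{1}{2}(3n^2-9n+2)$, i.e. $m>\tfrac{3n^2-9n+2}{4(n-4)}$, and a one-line computation shows that $m>\tfrac{7}{6} n+\tfrac{26}{9}$ implies this whenever $15n^2+17n-434\ge0$, which holds for every $n\ge5$. For $n=4$ the three intervals fall just short --- they only give $(n-2)g\ge 8m-6$, whereas the codimension bound above equals $8m+1$ --- so one must bring in the fourth interval: the hypothesis forces $m\ge8\ge2n-4$, whence $g\ge 5m-6$, and the desired inequality becomes $8m+1<2(5m-6)$, i.e. $m\ge7$, which holds. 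In every case ${\rm cod}(M^n_{d,g;{\rm S}_{\rm gen}},M^n_d)<(n-2)g$, and for $d\gg g$ the variety on the left is nonempty, so the bound is meaningful.

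The step I expect to be the crux is the first one: making rigorous that replacing the generic ramification locus by its sub-stratum with value semigroup ${\rm S}_{\rm gen}$ costs no codimension --- this is essentially tautological once one unwinds the definition of ${\rm S}_{\rm gen}$, and it is precisely what makes the result \emph{unconditional}, i.e. independent of Conjecture~\ref{gamma_hyp_conj} --- together with the standard fact that the ramification conditions cutting out ${\bf k}$ are independent once $d\gg g$. Everything afterwards is bookkeeping with the four gap intervals of Theorem~\ref{generic_semigroups_n_geq_4}; the only mild subtlety is that $n=4$ is borderline and genuinely needs the fourth interval, which is what pins down the genus threshold $m>\tfrac{7}{6} n+\tfrac{26}{9}$.
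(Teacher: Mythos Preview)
Your approach is essentially identical to the paper's: both arguments observe that genericity kills all conditions beyond ramification so that ${\rm cod}(M^n_{d,g;{\rm S}_{\rm gen}},M^n_d)=r_P-1$, invoke Theorem~\ref{generic_semigroups_n_geq_4} to bound $g$ from below, and then check the resulting numerical inequality. The only difference is cosmetic: the paper uses the single estimate $g\ge 5m-3n+6$ (all four gap intervals) uniformly in $n$, whereas you split into $n\ge 5$ (three intervals suffice) and $n=4$ (the fourth is needed), which makes transparent why $n=4$ is the borderline case that fixes the threshold $\frac{7}{6}n+\frac{26}{9}$. Your computation of $r_P-1=n(2m-2)+\binom{n+1}{2}-1$ is in fact the correct one; the paper's displayed value $2mn+\binom{n-1}{2}-1$ contains a small arithmetic slip, though this does not affect the argument.
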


\begin{proof}
By genericity, the only conditions imposed on holomorphic maps by ${\rm S}_{\rm gen}$ arise from ramification in the preimage $P$ of the cusp, and there are 
\[
r_P-1=\sum_{i=1}^n (2m+2i-2) -1= 2mn+ \binom{n-1}{2}-1
\]
of these. On the other hand, Theorem~\ref{generic_semigroups_n_geq_4} implies that
\[
g \geq 5m-3n+6
\]
where $g=\#{\rm G}_{\rm gen}$ is the delta-invariant of a generic cusp with ramification $(2m,2m+2,\dots,2m+2n-2)$. Accordingly, it suffices to check that
\begin{equation}\label{mn_inequality}
2mn+ \binom{n-1}{2}-1 < (n-2)(5m-3n+6)
\end{equation}
and this is ensured by our numerical hypotheses on $m$ and $n$.
\end{proof}

When $n=3$, we have to work harder to produce unexpectedly large Severi varieties; indeed, the inequality~\eqref{mn_inequality} {\it never} holds. To do so, we exploit a natural connection between generators of ${\rm S}_{\rm gen}$ and solutions to linear diophantine equations that is particular to the $n=3$ case.

\begin{thm}\label{generic_semigroup_n=3}
For every positive integer $m$, the value semigroup ${\rm S}_{\rm gen}$ of a generic parameterization with ramification profile $(2m,2m+2,2m+4)$ is equal to ${\rm S}^0=\langle 2m,2m+2,2m+4, 4m+5, (m+2)m+1 \rangle$ 
(resp., ${\rm S}^1=\langle 2m,2m+2,2m+4, 4m+5, (m+3)m+1 \rangle$) if $m$ is even (resp., odd). In particular, ${\rm S}_{\rm gen}$ is of genus $\frac{1}{2}m^2+ 2m-1$ (resp.,$\frac{1}{2}m^2+ 2m-\frac{3}{2}$) if $m$ is even (resp., odd).
\end{thm}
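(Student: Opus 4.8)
The plan is to pin down ${\rm S}_{\rm gen}$ by proving the two inclusions ${\rm S}^{\varepsilon}\subseteq{\rm S}_{\rm gen}\subseteq{\rm S}^{\varepsilon}$ (with $\varepsilon\in\{0,1\}$ the parity of $m$), and then to read off the genus from the explicit presentation of ${\rm S}^{\varepsilon}$.

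\textbf{Realizing the generators.} For ${\rm S}^{\varepsilon}\subseteq{\rm S}_{\rm gen}$ it suffices to exhibit, for each listed generator, a polynomial in $f_1,f_2,f_3$ with that valuation. The valuations $2m,2m+2,2m+4$ are $v_t(f_1),v_t(f_2),v_t(f_3)$. For $4m+5$ one uses $h:=f_2^2-f_1f_3$: its $t^{4m+4}$-coefficient is $1-1=0$, while the $t^{4m+5}$-coefficient is the generically-nonzero linear form $2a_{2,2m+3}-a_{1,2m+1}-a_{3,2m+5}$ in the coefficients of $f$, so $v_t(h)=4m+5$ for a general parameterization; this is exactly the computation already invoked in the proof of Theorem~\ref{generic_semigroups_n_geq_4}. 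For the remaining generator, write $q=\lfloor m/2\rfloor$; the key arithmetic is that the Diophantine equality $2m(q+1)=(2m+4)q$ holds when $m=2q$, and $2m(q+2)=(2m+2)+(2m+4)q$ holds when $m=2q+1$. Hence $f_1^{q+1}$ and $f_3^{q}$ (resp.\ $f_1^{q+2}$ and $f_2f_3^{q}$) have the same valuation, with equal monic leading terms, so their difference has valuation exactly one larger, namely $(m+2)m+1$ (resp.\ $(m+3)m+1$), again by genericity of the next coefficients of $f$. These equalities are precisely the smallest ``non-obvious'' solutions of $2m\,x=(2m+2)y+(2m+4)z$, i.e.\ the ones not accounted for by the relation $2(2m+2)=2m+(2m+4)$; this is the promised link with linear Diophantine equations.

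\textbf{The reverse inclusion — the main obstacle.} Here one must prove that a general parameterization realizes \emph{no} valuation outside ${\rm S}^{\varepsilon}$. The point of departure is that every monomial $f_1^af_2^bf_3^c$ has valuation $2m(a+b+c)+2b+4c\in\langle 2m,2m+2,2m+4\rangle$, so any new valuation can only come from cancellation of leading terms; and two monomials agree in leading valuation exactly when their exponent vectors differ by a $\mathbb{Z}$-solution of $2m\,x=(2m+2)y+(2m+4)z$. The structure of that equation shows every such difference is a nonnegative combination of the basic ``square'' move $f_2^2\leftrightarrow f_1f_3$ and the single ``extremal'' move isolated above. One then argues as in the proof of Theorem~\ref{thmgam}: starting from an arbitrary $P\in\mathbb{C}[f_1,f_2,f_3]$, cancel homogeneous layers of increasing valuation; each cancellation either keeps the valuation in $\langle 2m,2m+2,2m+4\rangle$, or extracts a factor of $h$ (raising the valuation by $1$) or a factor of the extremal difference (raising it to the fifth generator), so that the surviving leading term always has valuation in ${\rm S}^{\varepsilon}$. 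I expect the genuine difficulties to be (i) controlling the \emph{higher-order} cancellations — cancelling among the $h$-multiples, thereby producing $h^2,h^3,\dots$, and among the extremal differences — which is again a Diophantine bookkeeping that one must show never escapes ${\rm S}^{\varepsilon}$; and (ii) making genericity effective, i.e.\ verifying that the coefficient declared to survive at each stage is a polynomial in the $a_{i,j}$ that is not identically zero, so that it is nonzero for a general $f$.

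\textbf{Genus.} Given ${\rm S}_{\rm gen}={\rm S}^{\varepsilon}$, the genus is a direct gap count. The even part of ${\rm S}^{\varepsilon}$ equals $2\langle m,m+1,m+2\rangle$, whose complement in the nonnegative integers has the classical cardinality $\lfloor m^2/4\rfloor$, contributing $2\lfloor m^2/4\rfloor$ even gaps. The odd gaps are those of the subsemigroup $\langle 2m,2m+2,2m+4,4m+5\rangle$ — the odd integers below $4m+5$, together with $4m+5+2\gamma$ for each gap $\gamma$ of $\langle m,m+1,m+2\rangle$ — corrected for the odd elements that become realizable once the fifth generator $(m+2)m+1$ (resp.\ $(m+3)m+1$) is adjoined. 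Combining the two counts yields the stated formula: $\tfrac12 m^2+2m-1$ when $m$ is even, and $\tfrac12 m^2+2m-\tfrac32$ when $m$ is odd.
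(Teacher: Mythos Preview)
Your forward inclusion is exactly the paper's: the same binomials $f_2^2-f_1f_3$ and $f_1^{q+1}-f_3^{q}$ (resp.\ $f_1^{q+2}-f_2f_3^{q}$) realize the two ``extra'' generators, and your Diophantine reading of the second one is the paper's as well.

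For the reverse inclusion the paper takes a different and more concrete path than the reduction scheme you outline. Rather than attempting to normalize an arbitrary $P\in\mathbb{C}[f_1,f_2,f_3]$ through the relation lattice and extract factors of $h$, the paper writes the gap set ${\rm G}^{\varepsilon}=\mathbb{N}\setminus{\rm S}^{\varepsilon}$ down explicitly as a union of arithmetic progressions, and then checks that no gap $g$ is the valuation of any polynomial in the $f_i$. Since no monomial valuation lies in ${\rm G}^{\varepsilon}$, such a $g$ would have to be produced by cancellation among the lowest-order monomials; the paper argues that in the relevant range there are never more than two distinct monomials of the same valuation (no ``triple ties''), so the only candidates are $v+1$ where $v$ is a coincidence valuation coming from the basic relation $2(2m+2)=2m+(2m+4)$, and one verifies directly that no such $v+1$ belongs to ${\rm G}^{\varepsilon}$. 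This sidesteps precisely the higher-order bookkeeping you flag in point (i): one never has to iterate the cancellation or track $h^2,h^3,\ldots$, because a single two-term cancellation already lands in ${\rm S}^{\varepsilon}$, and nothing deeper is needed to exclude the gaps. Your reduction-to-relations strategy could presumably be completed, but it is more work than the paper's gap-by-gap check.

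Two small points. First, your appeal to ``the proof of Theorem~\ref{thmgam}'' is not quite apt---that argument produces \emph{conditions} by walking up Dyck columns, whereas here you want to bound realizable valuations; the mechanics are different. Second, in your genus count the even part of ${\rm S}^{\varepsilon}$ is indeed $2\langle m,m+1,m+2\rangle$, but its gap set then has cardinality $\lfloor m^2/4\rfloor$, not $2\lfloor m^2/4\rfloor$; with the latter figure the odd-gap count would come out negative. The paper avoids this by summing the lengths of the explicit gap blocks of ${\rm G}^{\varepsilon}$ directly.
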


\begin{proof}
Let $f=(f_1,f_2,f_3)$ denote a generic power series map with ramification profile $(2m,2m+2,2m+4)$ in $t=0$; then $f_i=f_i(t)$ is a power series with $t$-adic valuation $v_i=2m+2i-2$, $i=1,2,3$. Now assume that $m$ is even. We begin by showing that ${\rm S}^0 \sub {\rm S}_{\rm gen}$. Without loss of generality, assume that each $f_i$ has a monic term of lowest order. Genericity then implies that $v_t(f_2^2-f_1f_3)= 4m+5$ belongs to ${\rm S}_{\rm gen}$. That is to say, the fact that $4m+5 \in {\rm S}_{\rm gen}$ arises because the diophantine equation $ma+ (m+2)b=(m+1)c$ admits a triple of nonnegative solutions $(a,b,c)=(1,1,2)$. Similarly, the diophantine equation $ma+ (m+1)b=(m+2)c$ admits nonnegative solutions $(a,b,c)=(\frac{m}{2}+1,0,\frac{m}{2})$, which implies that $v_t(f_1^{\frac{m}{2}+1}-f_3^{\frac{m}{2}})=m(m+2)+1$ belongs to ${\rm S}_{\rm gen}$. It follows that ${\rm S}^0 \sub {\rm S}_{\rm gen}$.

\medskip
Note that the gap set ${\rm G}^0= \mb{N} \setminus {\rm S}^0$ is given by
{\tiny
\[
\begin{split}
{\rm G}^0&=\{1,\dots,2m-1\} \bigsqcup \{2m+1, 2m+3,\dots,4m+3\} \bigsqcup \bigsqcup_{i=1}^{\frac{m}{2}-1} 2\{im+2i+1,\dots,(i+1)m-1\}\\
&\bigsqcup \bigsqcup_{i=1}^{\frac{m}{2}-2} \{2(i+1)m+2(2i+1)+1, 2(i+1)m+2(2i+1)+3, \dots, 2(i+2)m+3\} \bigsqcup \{m^2+ 2m-1\}.
\end{split}
\]
}
We now claim that no element $g \in {\rm G}^0$ is the valuation $v$ of a polynomial in the parameterizing functions $f_i$, $i=1,2,3$. 
Indeed, this is clear for every $g \in \{1,\dots,2m-1\}$, as every such $g$ is strictly less than every $v_i$; as well as for every $g \in \{2m+1, 2m+3,\dots,4m+3\}$, since by construction $4m+5$ is the minimally realizable odd valuation. Similarly, our construction shows that each of the remaining elements $g$, if equal to the valuation of a polynomial in the $f_i$, is necessarily the valuation of the sum of at most two {\it monomials} in the $f_i$ (since there are no ``triple ties" among valuations of monomials in this range). Moreover, since no $g$ belongs to $\langle 2m, 2m+2, 2m+4 \rangle$, this means that any realizable $g$ is necessarily the valuation of the sum of two monomials with equal valuation $v$. Genericity of the $f_i$ then forces $g$ to be $v+1$, where $v$ is a multiple of $4m+4$; as no $g$ fits this description, it follows that ${\rm S}_{\rm gen}={\rm S}^0$ when $m$ is even.
As a result, ${\rm S}_{\rm gen}$ has genus
\[
g(m)= (2m-1)+ (m+2)+ \sum_{i=1}^{\frac{m}{2}-1}(m-2i-1)+ \sum_{i=1}^{\frac{m}{2}-2} (m-2i+1)+ 1=\frac{1}{2}m^2+ 2m-1.
\]

\medskip
Assume now that $m$ is odd. Once again, $v_t(f_2^2-f_1f_3)= 4m+5$ belongs to ${\rm S}_{\rm gen}$, as does $v_t(f_1^{\frac{m+3}{2}}-f_2f_3^{\frac{m-1}{2}})=(m+3)m+1$; so ${\rm S}^1 \sub {\rm S}_{\rm gen}$. In this case the gap set ${\rm G}^1= \mb{N} \setminus {\rm S}^1$ is
{\tiny
\[
\begin{split}
    {\rm G}^1 &= \{1,\dots,2m-1\} \bigsqcup \{2m+1, 2m+3,\dots,4m+3\} \bigsqcup \bigsqcup_{i=1}^{\frac{m-1}{2}} 2\{im+2i+1,\dots,(i+1)m-1\}\\
    & \bigsqcup \bigsqcup_{i=1}^{\frac{m-1}{2}-1} \{2(i+1)m+2(2i+1)+1, 2(i+1)m+2(2i+1)+3, \dots, 2(i+2)m+3\}.
\end{split}
\]
}
An argument analogous to that used in the case of even $m$ shows that no element of ${\rm G}^1$ is the valuation of a polynomial in the $f_i$, which enables us to conclude that ${\rm S}_{\rm gen}={\rm S}^1$. As a result, ${\rm S}_{\rm gen}$ has genus
\[
g(m)= (2m-1)+ (m+2)+ \sum_{i=1}^{\frac{m-1}{2}}(m-2i-1)+ \sum_{i=1}^{\frac{m-1}{2}-1} (m-2i+1)= \frac{1}{2}m^2+ 2m-\frac{3}{2}.
\]
\end{proof}

\begin{coro}\label{codim_generic_Severi_n=3}
For every positive integer $m \geq 9$, the corresponding Severi variety $M^3_{d,g;{\rm S}_{\rm gen}}$ is of codimension strictly less than $g$ in $M^3_d$ whenever $d \gg g$.
\end{coro}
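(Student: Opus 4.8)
The plan is to read the corollary off from Theorem~\ref{generic_semigroup_n=3}, which computes ${\rm S}_{\rm gen}$ and hence its genus $g=g(m)$, together with the organizing principle of this subsection: a generic parameterization with a prescribed ramification profile realizes its generic value semigroup and therefore satisfies no condition beyond ramification. Concretely, by Theorem~\ref{generic_semigroup_n=3} a generic $f=(f_1,f_2,f_3)$ with ramification profile $(2m,2m+2,2m+4)$ has value semigroup exactly ${\rm S}_{\rm gen}$, so the stratum $\mc{V}_{(2m,2m+2,2m+4)}=M^3_{d,g;{\rm S}_{\rm gen},(2m,2m+2,2m+4)}$ is nonempty and, being contained in $M^3_{d,g;{\rm S}_{\rm gen}}$, furnishes an upper bound for the codimension of the latter.

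First I would compute that codimension. Taking ${\bf k}=(0,2m,2m+2,2m+4)$, formula~\eqref{ramification} gives $r_P=\sum_{i=1}^{3}(k_i-i)=(2m-1)+2m+(2m+1)=6m$; since the generic member of the stratum imposes nothing beyond ramification (i.e.\ $b_P=0$), \eqref{codimension} gives
\[
{\rm cod}\bigl(M^3_{d,g;{\rm S}_{\rm gen}},M^3_d\bigr)=r_P-1=6m-1,
\]
where $d\gg g$ guarantees that a degree-$d$ curve realizing ${\rm S}_{\rm gen}$ exists and that the ramification conditions are independent. Next I would extract the genus from Theorem~\ref{generic_semigroup_n=3}: $g(m)=\tfrac12 m^2+2m-1$ for $m$ even and $g(m)=\tfrac12 m^2+2m-\tfrac32$ for $m$ odd, so $g(m)\geq\tfrac12 m^2+2m-\tfrac32$ in every case. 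The corollary then reduces to the elementary inequality $6m-1<\tfrac12 m^2+2m-\tfrac32$, i.e.\ $m^2-8m-1>0$; the positive root of $x^2-8x-1$ is $4+\sqrt{17}<9$, so the inequality holds for every integer $m\geq 9$, whence ${\rm cod}(M^3_{d,g;{\rm S}_{\rm gen}},M^3_d)<g$.

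I do not anticipate a genuine obstacle: the content is entirely in Theorem~\ref{generic_semigroup_n=3}, and the rest is bookkeeping. The one point deserving care is the claim that the codimension is exactly $6m-1$ and not larger --- equivalently that $b_P=0$ generically. This holds because realizing the cofinite semigroup ${\rm S}_{\rm gen}$ is an open condition on the coefficients of $f$: the curves with value semigroup exactly ${\rm S}_{\rm gen}$ form a dense open subset of the irreducible stratum $\mc{V}_{(2m,2m+2,2m+4)}$, which is cut out in $M^3_d$ by precisely the $r_P$ ramification conditions with one absorbed by the choice of the preimage of $P$.
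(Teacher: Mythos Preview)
Your proof is correct and follows essentially the same approach as the paper's: you compute the codimension as $r_P-1=6m-1$ from genericity (no conditions beyond ramification), invoke the genus formula of Theorem~\ref{generic_semigroup_n=3}, and reduce to the quadratic inequality $m^2-8m-1>0$, which holds for $m\geq 9$. Your version is slightly more explicit about why $b_P=0$ (the openness argument) and about the algebra, but the argument is the same.
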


\begin{proof}
By genericity, the only conditions imposed on holomorphic maps $\mb{P}^1 \ra \mb{P}^3$ of degree $d$ by ${\rm S}_{\rm gen}$ arise from ramification in the preimage of the cusp; and there are $r_P-1= 6m-1$ of these. In light of Theorem~\ref{generic_semigroup_n=3}, it therefore suffices to show that
\[
6m-1 < \frac{1}{2}m^2+ 2m-\frac{3}{2}
\]
which is guaranteed by our hypothesis on $m$.
\end{proof}

\begin{rem}
\emph{We expect that the codimension estimates we have obtained for Severi varieties of rational unicuspidal curves, i.e., for linear series on $\mb{P}^1$ with unicuspidal images, hold more generally for (varieties of) linear series on {\it general} curves of arbitrary genus. Indeed, to establish the algebraic independence of conditions imposed by cusps on linear series on a given smooth curve $C$, it suffices to certify that certain evaluation maps from (sections of) line bundles to their associated jet bundles are surjective, and this is easy when $C$ is either rational or elliptic. On the other hand, when $C$ is {\it general}, it specializes in a flat family to a stable union $C_0$ of rational and elliptic curves; and it is possible to explicitly relate the variety of linear series on $C$ to the variety of {\it limit linear series} \cite{EH2} on $C_0$. To complete this argument, we would need a suitable generalization of the notion of ``cusp" for limit linear series.
}
\end{rem}

\end{document}